\numberwithin{equation}{section}
\setlist[enumerate,1]{label={\rm(\arabic*)}, ref={\rm\arabic*}}
\newtheorem{theo}{Theorem}[section]
\newtheorem{prop}[theo]{Proposition}
\newtheorem{lemm}[theo]{Lemma}
\theoremstyle{definition}
\newtheorem{defi}[theo]{Definition}
\newtheorem{setup}[theo]{Setting}
\theoremstyle{remark}
\newtheorem{rem}[theo]{Remark}
\newtheorem{ex}[theo]{Example}
\newcommand{\Supp}[0]{\operatorname{Supp}}
\newcommand{\codim}[0]{\operatorname{codim}}
\newcommand{\ddbar}{dd^c}
\newcommand{\deldel}{\sqrt{-1}\partial \overline{\partial}}
\newcommand{\e}{\varepsilon}
\newcommand{\I}[1]{\mathcal{I}(#1)}
\DeclareMathOperator{\reg}{reg}
\DeclareMathOperator{\sing}{sing}
\newcommand{\cal}[1]{\mathcal{#1}}
\newcommand{\bb}[1]{\mathbb{#1}}
\newcommand*{\da@rightarrow}{\mathchar"0\hexnumber@\symAMSa 4B }
\newcommand*{\da@leftarrow}{\mathchar"0\hexnumber@\symAMSa 4C }
\newcommand*{\xdashrightarrow}[2][]{%
  \mathrel{%
    \mathpalette{\da@xarrow{#1}{#2}{}\da@rightarrow{\,}{}}{}%
  }%
}
\newcommand{\xdashleftarrow}[2][]{%
  \mathrel{%
    \mathpalette{\da@xarrow{#1}{#2}\da@leftarrow{}{}{\,}}{}%
  }%
}
\newcommand*{\da@xarrow}[7]{%
  \sbox0{$\ifx#7\scriptstyle\scriptscriptstyle\else\scriptstyle\fi#5#1#6\m@th$}%
  \sbox2{$\ifx#7\scriptstyle\scriptscriptstyle\else\scriptstyle\fi#5#2#6\m@th$}%
  \sbox4{$#7\dabar@\m@th$}%
  \dimen@=\wd0 %
  \ifdim\wd2 >\dimen@
    \dimen@=\wd2 %
  \fi
  \count@=2 %
  \def\da@bars{\dabar@\dabar@}%
  \@whiledim\count@\wd4<\dimen@\do{%
    \advance\count@\@ne
    \expandafter\def\expandafter\da@bars\expandafter{%
      \da@bars
      \dabar@ 
    }%
  }%
  \mathrel{#3}%
  \mathrel{%
    \mathop{\da@bars}\limits
    \ifx\\#1\\%
    \else
      _{\copy0}%
    \fi
    \ifx\\#2\\%
    \else
      ^{\copy2}%
    \fi
  }%
  \mathrel{#4}%
}
\DeclareMathOperator{\id}{id}
\DeclareMathOperator{\Bs}{Bs}
\DeclareMathOperator{\tor}{tor}
\DeclareMathOperator{\Bl}{Bl}
\DeclareMathOperator{\onept}{1pt}
\newcommand{\rmS}{\mathrm S}
\newcommand{\lra}{\longrightarrow}
\newcommand{\supth}[1]{\ensuremath{#1^{\mathrm{th}}}}
\title{On the minimal model program for projective varieties with\\ pseudo-effective tangent sheaf}
\author{Shin-ichi Matsumura}
\address{Mathematical Institute, Tohoku University, 
6-3, Aramaki Aza-Aoba, Aoba-ku, Sendai 980-8578, Japan.}
\email{mshinichi-math@tohoku.ac.jp}
\email{mshinichi0@gmail.com}
\begin{document}


\maketitle

\begin{prelims}

\DisplayAbstractInEnglish

\bigskip

\DisplayKeyWords

\medskip

\DisplayMSCclass

\end{prelims}


\newpage

\setcounter{tocdepth}{1}

\tableofcontents


\section{Introduction}\label{Sec-1}

\subsection{Motivation}

This paper aims to reveal the outcomes of the minimal model program (MMP) for projective klt varieties with pseudo-effective tangent sheaf.  The motivation of this paper lies in understanding the structure of projective varieties with certain non-negative curvature from the MMP viewpoint.

A smooth projective variety $X$ with pseudo-effective tangent bundle admits a smooth fibration $X \to Y$ onto an abelian variety $Y$ with rationally connected fibers (up to finite \'etale covers) by the main result of \cite{HIM}, which can be regarded as an extension of the main result of \cite{DPS94} formulated for nef tangent bundles.  The proofs of \cite{DPS94} and \cite{HIM} do not need the results of the MMP, but we can give another proof for the main result of \cite{DPS94} by using the MMP.  Indeed, \cite[Proposition 2.1]{CP91} and \cite[Section 5]{DPS94} assert that a smooth projective variety $X:=X_{0}$ with nef tangent bundle admits neither divisorial contractions nor flips.  Furthermore, a Mori fiber space $X=X_{0} \to X_{1}$ is a smooth fibration onto a smooth projective variety $X_{1}$ with nef tangent bundle.  Repeating this procedure for $X_{k}$, we obtain a sequence $X=X_{0} \to X_{1} \to \cdots \to X_{N}$ of Mori fiber spaces such that $X_{N}$ is one point or an \'etale quotient of an abelian variety.  The composite map $X=X_{0} \to X_{N}$ is also a Fano fibration by \cite[Theorem 5.3]{KW}, which re-proves the main result of \cite{DPS94} in the case where $X$ is projective.  Meanwhile, the MMP for projective varieties with pseudo-effective tangent bundle has not yet been studied.  More generally, although some structure theorems of varieties with certain non-negative curvature have recently been studied (for example, see \cite{CCM21, CH19, Mat20, Mat22, Wan22}), their relation with the MMP is still open for investigation.  As a first step, we reveal the MMP of projective varieties with pseudo-effective tangent bundle, which is the main motivation of this paper.

This paper has two specific purposes: The first purpose is to investigate what happens compared to the case of nef tangent bundles when we run the MMP for projective varieties with pseudo-effective tangent bundle.  This seems to be the first step toward understanding certain non-negative curvatures in the MMP.  The second purpose is to develop a basic theory of pseudo-effective torsion-free sheaves on normal projective varieties.  In our situation, the varieties appearing in the MMP can have singularities; therefore, the basic theory of pseudo-effective sheaves is actually needed.

\subsection{Main result}

The tangent sheaf $T_{X}$ of a normal projective variety $X$ is defined by the reflexive extension of the tangent bundle on the non-singular locus of $X$ (see Section~\ref{subsec-3-1} for the precise definition), and the pseudo-effectivity of $T_{X}$ is defined in Definition~\ref{def-psef} (see Proposition~\ref{prop-main} for characterizations of the pseudo-effectivity).  The following main result reveals the outcomes of the MMP for projective varieties with pseudo-effective tangent sheaf.

\begin{theo}\label{theo-main}
Let $X$ be a projective klt variety with pseudo-effective tangent sheaf.  Then, there exist finitely many projective varieties $\{X_{k}\}_{k=0}^{N}$ and $\{X'_{k}\}_{k=0}^{N}$ with
$$
X:=X_{0} \xdashrightarrow{\pi_{0}} X_{0}'  \xrightarrow{f_{0}} X_{1} \xdashrightarrow{\pi_{1}} X_{1}'\xrightarrow{f_{1}} \cdots 
\cdots
\xrightarrow{f_{N-2}} X_{N-1} \xdashrightarrow{\pi_{N-1}} X_{N-1}'  \xrightarrow{f_{N-1}} X_{N } 
$$
satisfying the following conditions$:$ 
\begin{enumerate}
\item $X_{k}$ and   $X'_{k}$ are projective klt varieties with pseudo-effective tangent sheaf;
\item $\pi_{k}\colon X_{k} \dashrightarrow X'_{k}$ is a birational map 
obtained from the composite of divisorial contractions and flips; 
\item $f_{k}\colon X_{k} \to  X_{k+1}$ is a Mori fiber space; and 
\item $X_{N }$ is one point or a Q-abelian variety $($i.e., a quasi-\'etale quotient of an abelian variety$).$ 
\end{enumerate}
\end{theo}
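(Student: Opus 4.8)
The plan is to run the minimal model program on $X$ and to prove, at each elementary step, that pseudo-effectivity of the tangent sheaf is inherited by the output; this lets the construction be iterated and, via a dimension count, terminated. The starting point is that pseudo-effectivity of $T_X$ forces $-K_X = \det T_X$ to be pseudo-effective, which is among the characterizations recorded in Proposition~\ref{prop-main} (the determinant of a pseudo-effective torsion-free sheaf is pseudo-effective). Since the pseudo-effective cone of divisors is salient, $-K_{X_k}$ pseudo-effective shows that $K_{X_k}$ is pseudo-effective if and only if $K_{X_k}\equiv 0$. In particular, whenever $K_{X_k}\not\equiv 0$ the $K_{X_k}$-MMP \emph{cannot} terminate in a minimal model: a nef $K_{X_k}$ is pseudo-effective, so together with pseudo-effective $-K_{X_k}$ it would give $K_{X_k}\equiv 0$, a contradiction. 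Hence, running a suitable $K_{X_k}$-MMP with scaling and invoking the termination results in the klt setting, the program passes through divisorial contractions and flips—assembled into the birational map $\pi_k$ producing $X_k'$—and necessarily ends in a Mori fiber space $f_k\colon X_k'\to X_{k+1}$. As $\dim X_{k+1}<\dim X_k'=\dim X_k$, after finitely many such blocks one reaches $X_N$ that is either a point or has $K_{X_N}\equiv 0$.

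The technical core is to propagate the hypothesis ``$T$ is pseudo-effective'' along the three elementary operations, using the theory of pseudo-effective sheaves established earlier (Definition~\ref{def-psef}, Proposition~\ref{prop-main}). For a \emph{divisorial contraction} $\phi\colon Z\to Z'$ one has $T_{Z'}=(\phi_*T_Z)^{**}$, because $\phi$ is an isomorphism over the complement of the codimension-$\geq 2$ image of its exceptional divisor and $T_{Z'}$ is reflexive; I would then show that the reflexive pushforward of a pseudo-effective sheaf under a birational morphism is again pseudo-effective. For a \emph{flip} $Z\dashrightarrow Z^{+}$, which is an isomorphism in codimension one, $T_Z$ and $T_{Z^{+}}$ agree on open subsets $U\cong U^{+}$ with complements of codimension $\geq 2$, and I would transport pseudo-effectivity from $T_Z|_U$ to $T_{Z^{+}}$ by a Hartogs-type extension of the defining singular Hermitian positivity across the flipping locus. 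For a \emph{Mori fiber space} $f\colon Z\to Z'$ the differential $df\colon T_Z\to f^{*}T_{Z'}$ is generically surjective, so $f^{*}T_{Z'}$ is generically a quotient of $T_Z$; using that pseudo-effectivity passes to quotient sheaves and then descends along the surjective morphism $f$, I would conclude that $T_{Z'}$ is pseudo-effective. Granting these three lemmas, conditions~(1)--(3) hold along the entire sequence, and in particular the numerical input ``$-K$ pseudo-effective'' is preserved, which is exactly what drives the termination argument of the previous paragraph.

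It remains to analyze the endpoint $X_N$ with $K_{X_N}\equiv 0$, establishing~(4). Here I would apply the Beauville--Bogomolov-type decomposition for klt varieties with numerically trivial canonical class: after a quasi-\'etale cover, $X_N$ splits as a product of an abelian variety with strict Calabi--Yau and irreducible holomorphic-symplectic factors. Pseudo-effectivity of $T_{X_N}$ is preserved under quasi-\'etale covers and restricts to the factors, so each factor would carry a pseudo-effective tangent sheaf; but on a positive-dimensional Calabi--Yau or holomorphic-symplectic factor $F$ the stable sheaf $T_F$ has $c_1\equiv 0$, whence a pseudo-effective such sheaf is numerically flat and forces an abelian cover of $F$, contradicting $h^{0}(F,T_F)=0$. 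Thus those factors are trivial, the cover is abelian, and $X_N$ is Q-abelian (compare the smooth input of \cite{HIM}). The step I expect to be the main obstacle is the birational propagation of the second paragraph—precisely the pushforward statement for divisorial contractions and the codimension-two extension for flips—since this is where the singularities introduced by the MMP interact most delicately with the analytic definition of pseudo-effectivity; by contrast, the Mori-fiber-space descent reduces to quotient-stability of pseudo-effective sheaves, and the $K\equiv 0$ endpoint reduces to the decomposition theorem together with the vanishing of holomorphic vector fields on the non-abelian factors.
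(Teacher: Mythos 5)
Your skeleton coincides with the paper's: $-K_X$ is pseudo-effective via the determinant (Proposition~\ref{det}), hence the dichotomy ``$K_{X_k}\equiv 0$ or $K_{X_k}$ not pseudo-effective''; the MMP with scaling from \cite[Corollary 1.3.3]{BCHM10} then ends in a Mori fiber space, pseudo-effectivity of the tangent sheaf is propagated at each step, and one inducts on dimension until the $K\equiv 0$ endpoint. The differences lie in how the two nontrivial inputs are discharged. For propagation, the paper does not split into divisorial contractions and flips: a single statement for arbitrary birational maps (Proposition~\ref{prop-bir}, deduced from Proposition~\ref{prop-basic}) covers both at once, because the definition of pseudo-effectivity only sees a big open subset and any birational map of normal varieties identifies $T_X$ with the pull-back of $T_Y$ over such a subset; this is exactly your Hartogs remark and it subsumes your pushforward lemma for divisorial contractions. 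Conversely, the step you dismiss as ``quotient-stability plus descent'' for the Mori fiber space is the hardest lemma in the paper: after passing to the generically surjective quotient $f^{*}T_Y$ of $T_X$, descending generic global generation of $\rmS^{[ab]}\mathcal{F}\otimes(bB)$ from $X$ to $Y$ requires cutting by hyperplanes to reduce to a generically finite map and then using the trace pairing on $f_{*}\mathcal{O}_X$, following \cite[Lemma 2.2]{EIM}; this is the content of Proposition~\ref{prop-basic} and is not a formal consequence of quotient-stability. Finally, at the endpoint the paper simply quotes \cite[Theorem 1.2]{Gac} to split off an abelian factor after a quasi-\'etale cover and inducts on dimension (Proposition~\ref{thm-abel}), whereas you propose to re-derive this from the Beauville--Bogomolov-type decomposition together with the implication ``pseudo-effective and $c_1\equiv 0$ implies numerically flat.'' Be careful there: for the notion of pseudo-effectivity used in this paper that implication fails on klt varieties --- Example~\ref{ex}\eqref{ex-2} (the singular Kummer surface) has pseudo-effective tangent sheaf with $c_1\equiv 0$ while $L=\pi^{*}\mathcal{O}_{\mathbb{P}(T_X)}(1)$ is not even pseudo-effective --- so your treatment of the strict Calabi--Yau and symplectic factors would need the full strength of Gachet's analysis rather than a numerical-flatness shortcut. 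With that step replaced by the citation, your argument is essentially the paper's proof.
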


Theorem~\ref{theo-main} is a structure theorem for a projective variety $X$ with pseudo-effective tangent sheaf, which says that the basic building blocks of $X$ are Fano varieties and Q-abelian varieties.  The theorem works not only for smooth varieties but also for klt varieties, which is an advantage compared to \cite{HIM}.  Note that $X$ can admit a divisorial contraction or a flip, although divisorial contractions or flips never appear in the case of nef tangent bundles.  Indeed, the blow-up $X:={\Bl}_{\onept}(Y) \to Y$ of a Hirzebruch surface $Y$ at a general point is a divisorial contraction, and the tangent bundle $T_{X}$ is pseudo-effective (see \cite[Section 4]{HIM}); also, smooth projective toric varieties, which always have pseudo-effective tangent bundle, can admit a flip (see \cite{Fuj03, FS04}).

The strategy of the proof of Theorem~\ref{theo-main} is as follows: We first run the MMP for $X$ using \cite[Corollary 1.3.3]{BCHM10} and then obtain a birational map $X \dashrightarrow X'$ and a Mori fiber space $X' \to Y$.  A key observation is that the pseudo-effectivity of the tangent sheaves is preserved by Propositions~\ref{prop-bir} and~\ref{prop-fib} (\textit{i.e.}, $T_{X'}$ and $T_{Y}$ are still pseudo-effective).  This follows from characterizations of the pseudo-effectivity (see Proposition~\ref{prop-main}).  This observation enables us to repeat this procedure for $Y$, leading us to obtain $\{X_{k}\}_{k=0}^{N}$ and $\{X'_{k}\}_{k=0}^{N}$ in Theorem~\ref{theo-main} so that $T_{X_{N}}$ is pseudo-effective and $K_{X_{N}}$ is nef.  We finally conclude that $X_{N }$ is actually (one point or) a $Q$-abelian variety by \cite[Theorem 1.2]{Gac}.

The remainder of this paper is organized as follows: In Section~\ref{Sec-2}, we develop a basic theory of pseudo-effective torsion-free sheaves on normal projective varieties, which is harder than we expected. In Section~\ref{Sec-3}, we study the MMP for projective varieties with pseudo-effective tangent sheaves to prove Theorem~\ref{theo-main}.

\subsection*{Notation}\label{subsec-not}
Throughout this paper, we interchangeably use the terms ``Cartier divisors,'' ``invertible sheaves,'' and ``line bundles.''  We also use the additive notation for tensor products (\textit{e.g.}, $L+M:=L\otimes M$ for line bundles $L$ and $M$).  Furthermore, we interchangeably use the terms ``locally free sheaves'' and ``vector bundles,'' and often simply abbreviate {\textit{possibly singular Hermitian}} metrics to ``metrics.''  All sheaves in this paper are coherent; thus, we omit the term ``coherent.''  Fibrations refer to proper surjective holomorphic maps with connected fibers.  We use the basic properties of the non-nef loci and the non-ample loci in \cite{BKK, Bou04, ELMNP06, ELMNP09}.

\subsection*{Acknowledgments}
The author would like to thank Prof.\ Kiwamu Watanabe for his question at the symposium on Algebraic Geometry at Waseda University, which gave an impetus to start studying the issue of this paper.  He also would like to thank Prof.\ Sho Ejiri for discussing \cite[Lemma 2.2]{EIM} and Prof.\ C\'ecile Gachet for discussing Proposition~\ref{thm-abel}.  
He is grateful to an anonymous referee for suggesting that he explain Example~\ref{ex}\eqref{ex-2}.

\section{Pseudo-effective sheaves on normal projective varieties}\label{Sec-2}

In this section, we develop a basic theory for the pseudo-effective torsion-free sheaves on normal projective varieties; specifically, we provide the definition of pseudo-effective sheaves and their fundamental properties.

\subsection{Singular Hermitian metrics on  torsion-free  sheaves}\label{subsec-2-1}
In this subsection, following \cite{MW}, we review singular Hermitian metrics on torsion-free sheaves, taking them on vector bundles as known (see \cite{Rau15, HPS18, PT18}).

Let $\cal{E}$ be a torsion-free (coherent) sheaf on a normal variety $X$.  Set $X_{0}:=X_{\reg} \cap X_\mathcal{E}$, where $X_{\reg}$ is the non-singular locus of $X$ and $X_\mathcal{E}$ is the maximal subset where $\mathcal{E}$ is locally free.  Note that $X_{0} \subset X$ is a Zariski open set with $\codim (X \setminus X_{0})\geq 2$ since $X$ is normal and $\cal{E}$ is torsion-free.  Let $h$ be \textit{a singular Hermitian metric} on $\cal{E}$, by which we mean a possibly singular Hermitian metric $h$ on the vector bundle $\mathcal{E}|_{X_{0}}$, where $\mathcal{E}|_{X_{0}}$ is the restriction of $\mathcal{E}$ to $X_{0}$.  Note that $h$ is a metric on the vector bundle $\mathcal{E}|_{X_{0}}$, but $h$ is not defined on $X \setminus X_0$.  Let $\theta$ be a smooth $(1,1)$-form on $X$ with local potential; that is, it can be written as $\theta=\ddbar f$ on a neighborhood of every point in $X$.  We then write
$$
\sqrt{-1}\Theta_{h} \geq \theta \otimes \id \text{ on } X  
$$
if for any local section $e \in H^0(U, \mathcal{E}^*)$ on an open set $U \subset X$, the function $\log |e|_{h^{*}}-f$ is plurisubharmonic (psh) on $U \cap X_0$, where $f$ is a local potential of $\theta$ and $h^{*}$ is the induced metric on the dual sheaf $\mathcal{E}^*:= \mathscr{H}\!\!\!\text{\calligra om}(\mathcal{E}, \mathcal{O}_{X})$.  The psh function $\log |e|_{h^{*}}-f$ is defined \textit{a priori} only on $U \cap X_0$, but it is automatically extended to a psh function on $U$ since $\codim (X \setminus X_{0})\geq 2$.  The condition $\sqrt{-1}\Theta_{h} \geq 0 \otimes \id $, simply written as $\sqrt{-1}\Theta_{h} \geq 0$ here, corresponds to the Griffiths semi-positivity of $(\cal E, h)$ when $\mathcal{E}$ is a vector bundle and $h$ is a smooth Hermitian metric.  We often write the condition as $\sqrt{-1}\Theta_{h} > 0$ if $X$ is compact and $\sqrt{-1}\Theta_{h} \geq \omega_{X}\otimes \id$ holds for some K\"ahler form $\omega_{X}$ on $X$ with local potential.

The following definition extends the notation of the pseudo-effectivity 
on vector bundles to  torsion-free  sheaves. 

\begin{defi}\label{def-psef}
Let $X$ be a compact K\"ahler space and $\omega_{X}$ be a K\"ahler form on $X$ with local potential.  A torsion-free sheaf $\mathcal{E}$ on $X$ is said to be {\textit{pseudo-effective}} if for every $m \in \mathbb{Z}_{+}$, there exists a singular Hermitian metric $h_{m}$ on the $\supth{m}$ symmetric power $\rmS^{m} \cal{E}|_{X_{0}}$ such that $\sqrt{-1}\Theta_{h_{m}}\geq -\omega_{X} \otimes \id$.
\end{defi}

\begin{rem}
Let $\mathcal{E}$ be a vector bundle on a smooth projective variety $X$, and consider the hyperplane bundle $\mathcal{O}_{\mathbb{P}(E) }(1)$ of the projective space bundle $\mathbb{P}(E) \to X$.  Even in this case, our definition of the pseudo-effectivity is stronger than the condition that $\mathcal{O}_{\mathbb{P}(E) }(1)$ is a pseudo-effective line bundle, which is often adopted as the definition of the pseudo-effectivity of $\mathcal{E}$.  Our definition requires that the image of the non-nef locus of $\mathcal{O}_{\mathbb{P}(E) }(1)$ is properly contained in $X$.
\end{rem}

Note that $h_{m}$ is a metric defined \textit{a priori} on $\rmS^{m} \mathcal{E}|_{X_{0}}$, but it can be extended to a metric on $\rmS^{m} \mathcal{E}|_{X_{\reg} \cap X_{\rmS^{m} \mathcal{E}}}$ since $\omega_{X}$ is defined on $X$ (not only on $X_{0}$).  The above-mentioned definition does not change even if we replace $\rmS^{m} \cal{E}|_{X_{0}}$ with the reflexive hull $\rmS^{[m]} \cal{E}:=(\rmS^{m} \cal{E})^{**}$.  Pseudo-effectivity can be defined in several other  ways.  These definitions are compared in Section~\ref{subsec-2-3}.

\subsection{Characterizations of pseudo-effective sheaves}\label{subsec-2-2}

In this subsection, we provide some characterizations of the pseudo-effectivity of torsion-free sheaves.  We first begin with fixing the notation.

\begin{setup}\label{setup}
Let $\cal{E}$ be a torsion-free sheaf on a normal projective variety $X$.  Let $\pi_{\mathcal{E}}\colon \bb{P}(\cal{E}) \to X$ be the main component of the projectivization ${\bf{Proj}}(\oplus_{m=0}^{\infty}\rmS^{m} \cal{E})$ of the graded sheaf $\oplus_{m=0}^{\infty}\rmS^{m} \cal{E}$ with the hyperplane bundle $\cal{O}_{\bb{P}(\cal{E})}(1)$, and let $\pi \colon P \to \bb{P}(\cal{E})$ be a resolution of singularities of $\bb{P}(\cal{E})$ via the normalization.  We have the following commutative diagram:
\[
\xymatrix{
\bb{P}(\cal{E}) \ar[d]_{\pi_{\mathcal{E}}}&  & P\ar[ll]_{\hspace{0cm} \pi}  \ar@/^16pt/[dll]^{p} \\
X\rlap{.} &  & 
}
\]
Set $X_{0}:=X_{\reg} \cap X_{\cal{E}}$ and $P_{0}:=p^{-1}(X_{0})$, where $X_{\cal{E}}$ is the maximal subset where $\cal{E}$ is locally free.  Assume that $\pi \colon P \to \bb{P}(\cal{E})$ is an isomorphism on $P_{0}=p^{-1}(X_{0})$ and that both the $\pi$-exceptional locus and $P \setminus P_{0}$ are divisorial.

The notation below is frequently used in this section:  
\begin{itemize}
\item $L:=\pi^{*} \cal{O}_{\bb{P}(\cal{E})}(1)$; 
\item $A$: an ample line bundle on $X$; 
\item $\omega_{P}$: a K\"ahler form on $P$; 
\item $\omega_{X}$: a K\"ahler form on $X$ with local potential; 
\item $\Lambda$: an effective $p$-exceptional  divisor such that 
$p_{*}(m(L +  \Lambda) )$ is reflexive for any $m \in \mathbb{Z}_{+}$. 
\end{itemize}
The existence of the divisor $\Lambda$ is guaranteed by \cite[Lemma III.5.10]{Nak04}.  As stated in Section~\ref{Sec-1}, the notation $p_{*}(M)$ refers to the direct image sheaf of the invertible sheaf $\mathcal{O}_{P}(M)$ associated to a divisor $M$.
\end{setup}

The following proposition characterizes the pseudo-effectivity of torsion-free sheaves.

\begin{prop}\label{prop-main}
We consider Setting~\ref{setup} and use the notation in Setting~\ref{setup} without explicit mention.  Then, the following conditions are equivalent:
\begin{enumerate}
\item\label{pm-1} There exists an ample line bundle $A$ on $X$ such that the reflexive hull $\rmS^{[m]} \cal{E} \otimes A$ is globally generated at a general point in $X$ for every $m \in \mathbb{Z}_{+}$.
\item\label{pm-2} There exists a K\"ahler form $\omega_{X}$ on $X$ with local potential satisfying the following: For every $m \in \mathbb{Z}_{+}$, there exists a singular Hermitian metric $h_{m}$ on $\rmS^{[m]} \cal{E}$ such that $\sqrt{-1}\Theta_{h_{m}}\geq -\omega_{X} \otimes \id$ on $X$ $($i.e., the sheaf $\mathcal{E}$ is pseudo-effective in the sense of Definition~\ref{def-psef}\,$)$.

\item\label{pm-3} The non-nef locus of $L |_{P_{0}}$ is not dominant over $X_{0}$ in the following sense:
For every $\e$, there exists a singular Hermitian metric $g_{\e}$ on $L|_{P_{0}}$ with the following:
\begin{itemize}
\item $\sqrt{-1}\Theta_{g_{\e}} \geq -\e \omega_{P}$ holds on $P_{0}$$;$
\item $\{x \in P_{0}\,|\, \nu(g_{\e}, x) >0 \}$ is not dominant over $X_{0}$; here $\nu(g_{\e}, x)$ denotes the Lelong number of the weight of $g_{\e}$.
\end{itemize}

\item\label{pm-4} Let $\Lambda$ be an effective $p$-exceptional divisor such that $p_{*}(m(L + \Lambda) )$ is reflexive for any $m \in \mathbb{Z}_{+}$.  The non-nef locus of $L + \Lambda $ is not dominant over $X$.

\item\label{pm-5} Let $\Lambda$ be an effective $p$-exceptional divisor such that $p_{*}(m(L + \Lambda) )$ is reflexive for any $m \in \mathbb{Z}_{+}$.  There exists an ample line bundle $A$ on $X$ such that the non-ample locus of $m(L+ \Lambda) +p^{*} A$ is not dominant over $X$ for every $m \in \mathbb{Z}_{+}$.

\item\label{pm-6} For an ample line bundle $A$ on $X$ and an integer $a \in \mathbb{Z}_{+}$, there exists an integer $b \in \mathbb{Z}_{+}$ such that the reflexive hull $\rmS^{[ab]} \cal{E} \otimes (bA)$ is globally generated at a general point in $X$.
\end{enumerate}
\end{prop}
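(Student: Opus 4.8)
The plan is to use the tautological line bundle $L$ on the resolution $P$ as a hub and to establish the whole proposition as a chain of equivalences $(1)\Leftrightarrow(6)\Leftrightarrow(5)\Leftrightarrow(4)\Leftrightarrow(3)\Leftrightarrow(2)$. Two structural dictionaries drive everything. By the choice of $\Lambda$ one has $p_{*}\OX_{P}(m(L+\Lambda))=\rmS^{[m]}\cal{E}$ for every $m\in\mathbb{Z}_{+}$, so the projection formula yields the canonical identification
$$
H^{0}(X,\rmS^{[m]}\cal{E}\otimes A)\cong H^{0}(P,m(L+\Lambda)+p^{*}A).
$$
Moreover $\Lambda$ is $p$-exceptional and $\pi$ is an isomorphism over $X_{0}$, so $(L+\Lambda)|_{P_{0}}=L|_{P_{0}}$, and since $\cal{O}(1)$ restricts to the hyperplane bundle on each fibre $\bb{P}(\cal{E}_{x})$, the bundle $L$ is $p$-ample over $X_{0}$. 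This fibrewise ampleness is what allows positivity of $L$ on $P$ to be read off over a general point of $X$.

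For the algebraic and base-locus conditions I would prove $(1)\Leftrightarrow(6)\Leftrightarrow(5)$. The implication $(1)\Rightarrow(6)$ is the only elementary one: given the ample $A$ from $(1)$ and any ample $A'$ and $a\in\mathbb{Z}_{+}$, Serre's theorem makes $bA'-A$ very ample for $b\gg0$, and since a tensor product of sheaves globally generated at a general point is again globally generated there, $\rmS^{[ab]}\cal{E}\otimes bA'\cong(\rmS^{[ab]}\cal{E}\otimes A)\otimes(bA'-A)$ is globally generated at a general point. For the remaining equivalences I would feed the section dictionary into the ELMNP description of base loci: via the identification above, global generation of $\rmS^{[m]}\cal{E}\otimes A$ at a general $x\in X$ says exactly that the global sections of $m(L+\Lambda)+p^{*}A$ generate along the fibre $p^{-1}(x)$, i.e.\ that the relevant base locus avoids $p^{-1}(x)$. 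Because $L$ is $p$-ample over $X_{0}$, the general fibre is disjoint from the non-ample locus precisely when these section conditions hold for a cofinal set of twists; this matches $(5)$, and reindexing the twists matches $(6)$.

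The geometric core is $(5)\Leftrightarrow(4)\Leftrightarrow(3)$, which I would settle with the theory of non-nef and non-ample loci of \cite{BKK,Bou04,ELMNP06,ELMNP09}. For $(5)\Rightarrow(4)$ I would use that the non-nef (diminished base) locus of $L+\Lambda$ is the increasing union $\bigcup_{m}\mathbf{B}(m(L+\Lambda)+p^{*}A)$, each term lying inside the corresponding non-ample locus; the images in $X$ form an increasing chain of closed subsets, which stabilises by noetherianity, so non-dominance for every $m$ forces the non-nef locus to be non-dominant. For $(4)\Rightarrow(5)$ I would add a sufficiently positive $p^{*}A$ to contract the augmented base locus back below dominance. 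The equivalence $(4)\Leftrightarrow(3)$ is the comparison between $P$ and $P_{0}$: on $P_{0}$ one has $(L+\Lambda)|_{P_{0}}=L|_{P_{0}}$, the non-nef locus is detected by the Lelong numbers of almost-positive metrics (Boucksom), and since $X\setminus X_{0}$ has codimension $\geq2$ and $p$ is proper, a closed subset of $P$ dominates $X$ if and only if its trace on $P_{0}$ dominates $X_{0}$.

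Finally the analytic step $(3)\Leftrightarrow(2)$, which I expect to be the main obstacle. Following \cite{MW}, a singular Hermitian metric $h_{m}$ on $\rmS^{[m]}\cal{E}$ and a singular metric on $L|_{P_{0}}$ correspond through the fibrewise Fubini--Study construction and its pushforward, and under this correspondence the bound $\sqrt{-1}\Theta_{h_{m}}\geq-\omega_{X}\otimes\id$ matches $\sqrt{-1}\Theta\geq-\e\,\omega_{P}$, while the locus where $h_{m}$ degenerates matches the Lelong-number locus of the metric on $L$. Concretely, from the metrics $g_{\e}$ of $(3)$ I would push forward $g_{\e}^{\otimes m}$ to produce $h_{m}$ on $\rmS^{[m]}\cal{E}$, and conversely from $(2)$ I would pull the $h_{m}$ back to metrics on $m(L+\Lambda)|_{P_{0}}$ and let $m\to\infty$ to obtain an almost-positive current in $L+\Lambda$ with non-dominant Lelong locus. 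The delicate points are that the metrics are genuinely singular and \emph{a priori} defined only on $X_{0}$ (resp.\ $P_{0}$), so one must extend them across the codimension-$\geq2$ set $X\setminus X_{0}$ and across the $\pi$-exceptional and non-locally-free divisors, and one must track Lelong numbers through the pushforward so that ``degeneracy of $h_{m}$'' and ``non-dominance of the non-nef locus'' are matched. Demailly's regularisation together with extension of plurisubharmonic functions across sets of codimension $\geq2$ are the tools I would use, after which the chain of equivalences is complete.
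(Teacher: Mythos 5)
Your overall skeleton matches the paper's in outline: the dictionary $H^{0}(X,\rmS^{[m]}\cal{E}\otimes A)\cong H^{0}(P,\,m(L+\Lambda)+p^{*}A)$ coming from the reflexivity of $p_{*}(m(L+\Lambda))$ and $\codim(X\setminus X_{0})\geq 2$, the identification $(L+\Lambda)|_{P_{0}}=L|_{P_{0}}$, and the easy implications $(1)\Rightarrow(6)$, $(1)\Rightarrow(2)$, $(2)\Rightarrow(3)$ (pull the metric through the surjection $p^{*}(\rmS^{[m]}\cal{E}\otimes A)\to mL+p^{*}A$ over $X_{0}$ and take $m$-th roots) are all essentially as in the paper. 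The genuine gap sits in every ``converse'' direction, i.e.\ wherever you must pass from non-dominance of a non-nef or non-ample locus (or from a metric with non-dominant Lelong locus) back to generic global generation of the direct image. Global generation of $\rmS^{[m]}\cal{E}\otimes A$ at a general $y$ is \emph{not} the statement that the base locus of $m(L+\Lambda)+p^{*}A$ avoids the fibre $P_{y}$; it is the strictly stronger statement that the restriction map $H^{0}(P,\,m(L+\Lambda)+p^{*}A)\to H^{0}(P_{y},\,mL|_{P_{y}})$ is surjective. Your sentence ``the general fibre is disjoint from the non-ample locus precisely when these section conditions hold'' asserts this surjectivity without proving it. The paper's proof of $(3)\Rightarrow(4)$ is exactly the work needed here: one chooses $y$ so that $\mathcal{I}(g_{\e}^{m}|_{P_{y}})=\mathcal{O}_{P_{y}}$ (Fubini plus the restriction formula), writes $m(L+\Lambda)+k_{0}p^{*}A=K_{P}+M$ with $M$ carrying a strictly positive singular metric built from $g_{\e}$ and the decomposition $k_{0}(L+p^{*}A)=A_{P}+E$, and invokes an Ohsawa--Takegoshi extension theorem on the \emph{non-compact} set $P_{0}$ (the paper's separate $L^{2}$-extension lemma, which itself requires the Stein-complement trick), before using reflexivity of $p_{*}(m(L+\Lambda))$ to extend sections from $P_{0}$ to $P$. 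A similar extension input (\textit{e.g.}\ \cite[Theorem 1.1]{CDM17}) drives $(5)\Rightarrow(1)$. None of this machinery appears in your proposal, so the implications $(3)\Rightarrow(4)$ and $(5)\Rightarrow(1)$, and hence the whole equivalence, remain unproved.

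Two secondary problems. First, your argument for $(5)\Rightarrow(4)$ via ``the images form an increasing chain of closed subsets, which stabilises by noetherianity'' is incorrect: noetherianity gives the descending chain condition on closed sets, and an increasing union of proper closed subsets need not stabilise nor be contained in a proper closed subset. The paper instead certifies non-dominance of the non-nef locus by exhibiting, for each $\e$, a single almost-positively curved metric on $L+\Lambda$ that is smooth in a neighbourhood of a fixed general fibre. Second, your direct construction of $h_{m}$ in $(3)\Rightarrow(2)$ by ``pushing forward $g_{\e}^{\otimes m}$'' does not produce a metric on $\rmS^{[m]}\cal{E}$: the natural positively curved object is the $L^{2}$-metric on $p_{*}(K_{P/X}+mL)$, which differs from $\rmS^{[m]}\cal{E}$ by a twist by $\det\cal{E}$ and a shift in $m$. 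This is precisely why the paper routes $(3)\Rightarrow(2)$ through $(4)$, $(5)$ and $(1)$, reserving the direct-image argument for the determinant bundle in a separate proposition.
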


\begin{proof}
\eqref{pm-1} $\Rightarrow$ \eqref{pm-2}.
By assumption, the sections of $\rmS^{[m]} \cal{E} \otimes A$ determine a singular Hermitian metric $H_{m}$ on $\rmS^{[m]} \cal{E} \otimes A$ with $\sqrt{-1}\Theta_{H_{m}}\geq 0 \otimes \id$ on $X$.  Since $A$ is ample, we can take a smooth Hermitian metric $g$ on $A$ such that $\omega_{X}:=\sqrt{-1}\Theta_{g}$ is a K\"ahler form with local potential.  We can then easily check that the metric $h_{m}:=H_{m} \otimes g^{-1}$ on $\rmS^{[m]} \cal{E}$ satisfies that $\sqrt{-1}\Theta_{h_{m}}\geq -\omega_{X} \otimes \id$ on $X$.

\eqref{pm-2} $\Rightarrow$ \eqref{pm-3}. 
Take a smooth Hermitian metric $g$ on $A$ such that $\sqrt{-1}\Theta_{g}$ is a K\"ahler form with local potential.  By replacing $(A, g)$ with $(kA, g^{k})$ for $k \gg 1$, we may assume that the metric $h_{m} g$ on $\rmS^{[m]} \mathcal{E} \otimes A$ satisfies that $\sqrt{-1}\Theta_{h_{m} g} \geq 0 \otimes \id $ on $X$ by assumption.

The fibration $p\colon P \to X$ over $X_{0}$ coincides with the projective space bundle $\mathbb{P}(\mathcal{E}) \to X$ of the locally free sheaf $\mathcal{E}|_{X_{0}}$.  In particular, the line bundle $L $ corresponds to $\mathcal{O}_{\mathbb{P}(\mathcal{E})}(1)$ over $X_{0}$; thus $L$ is relatively $p$-ample over $X_{0}$ and satisfies that $p_{*}(mL )=\rmS^{m} \mathcal{E}=\rmS^{[m]} \mathcal{E}$ on $X_{0}$.  This implies that the natural morphism
$$
p^{*} \left(\rmS^{[m]} \mathcal{E} \otimes A\right)=p^{*}p_{*} \left(mL  + p^{*}A\right) \lra mL  + p^{*}A
$$
is surjective over $X_{0}$ for any $m \gg 1$.  The metric $p^{*}(h_{m} g)$ defined on $p^{*} (\rmS^{[m]} \mathcal{E} \otimes A)|_{P_{0}}$ satisfies that
$$
\sqrt{-1}\Theta_{p^{*}(h_{m} g)} \geq 0 \otimes \id  \text{ on } P_{0}.
$$
Note that $P \setminus P_{0}$ may be divisorial; thus $p^{*}(h_{m} g)$ does not necessarily determine a metric on $X$.  Let us consider the singular Hermitian metric $G_{m}$ on $(mL + p^{*}A)|_{P_{0}}$ induced by $p^{*}(h_{m} g)$ and the above surjective morphism.  By construction, we see that $\sqrt{-1}\Theta_{G_{m}} \geq 0$ holds and the upper level set of Lelong numbers is not dominant over $X_{0}$.  The metric $g_{m}:=(G_{m} p^{*}g)^{1/m} $ on $L|_{P_{0}}$ satisfies that $\sqrt{-1}\Theta_{g_{m}} \geq -(1/m) p^{*}\omega_{X}$.  We can then easily see that the metrics $\{ g_{m} \}_{m=1}^{\infty}$ on $L|_{P_{0}}$ for $m \gg 1$ provide the desired metrics $\{ g_{\e} \}_{\e>0}$.

\eqref{pm-3} $\Rightarrow$ \eqref{pm-4}. 
Fix an effective $p$-exceptional divisor $\Lambda$ such that $p_{*}(m(L + \Lambda) )$ is reflexive.  Almost all points $y \in Y_{0}$ satisfy that
$$
\mathcal{I} \left(g^{m}_{\e}|_{X_{y}}\right) =\mathcal{I} \left(g^{m}_{\e}\right)|_{X_{y}}  = \mathcal{O}_{P_{y}} \text{ holds 
for any } m \in \mathbb{Z}_{+}
$$
by Fubini's theorem and the restriction formula (see \cite[the argument of Claim 2.1]{Mat18} for the precise argument).  Here $\mathcal{I} (g_{\e})$ is the multiplier ideal sheaf, and $P_{y}$ is the fiber of $p\colon P \to X$ at $y \in X$.  Note that the last equality follows from the assumption on Lelong numbers.  We fix such a point $y$ with the above property.  The fiber $P_{y} $ does not intersect with the $p$-exceptional divisor $\Lambda$; in particular, we obtain $(m(L+ \Lambda ) + p^* A)|_{P_{y}} = mL |_{P_{y}}$.

For a sufficiently ample line bundle $A$, we will prove that the restriction map
\begin{align}\label{sur}
H^{0}(P, m(L+ \Lambda )+ p^* A) \lra 
H^{0}\left(P_{y},  (m(L+ \Lambda ) + p^* A)|_{P_{y}}\right)
= H^{0}\left(P_{y},  mL |_{P_{y}}\right)
\end{align}
is surjective for $m \gg 1$.  We now check that condition~\eqref{pm-4} follows from this surjectivity.  To this end, we consider the singular Hermitian metric $G_{m}$ on $m(L+ \Lambda) + p^* A$ induced by extensions of a basis of $H^{0}(P_{y}, mL|_{P_{y}})$.  The fibration $p\colon P \to X$ coincides with the projective space bundle $\mathbb{P}(\mathcal{E}) \to X$ over $X_{0}$; hence $mL|_{P_{y}}$ is very ample. Thus the metric $G_{m}$ is smooth on a neighborhood of $P_{y}$.  This indicates that for a smooth metric $g$ on $A$, the metrics $g_{m}:=(G_{m} p^{*}g)^{1/m}$ provide the desired metrics on $L+\Lambda$; therefore, the non-nef locus of $L+\Lambda$ is not dominant over $X$ (see \cite[Definition 3.3]{Bou04}).

To extend sections on the fiber $P_{y}$, we first extend them to the Zariski open set $P_{0}=p^{-1}(X_{0})$ by using a version of the Ohsawa--Takegoshi $L^{2}$-extension theorem (see Lemma~\ref{lem-key2}).  Lemma~\ref{lem-key2} will be proved later.  For a sufficiently ample line bundle $A$ on $X$, the line bundle $\cal{O}_{P(\mathcal{E})}(1) + \pi_{\cal E}^{*}A$ is ample on $P(\mathcal{E})$ since $\cal{O}_{P(\mathcal{E})}(1)$ is relatively $\pi_{\mathcal{E}}$-ample.  This implies that the non-ample locus of the line bundle
$$
L+p^* A=\pi^{*} \left(\cal{O}_{P(\mathcal{E})}(1) + \pi_{\cal E}^{*}A\right)
$$ 
is contained in the $\pi$-exceptional locus.  Hence, we find an ample line bundle $A_{P} $ on $P$ and an effective $\pi$-exceptional divisor $E$ such that $k_{0}(L+p^* A) = A_{P} + E$ holds and $A_{P} - K_{P}$ is ample.  We will show that the restriction map
\begin{align}\label{sur2}
H^{0}\left(P_{0}, m(L+\Lambda) + k_{0} p^* A\right) \lra 
H^{0}\left(P_{y}, (m(L+\Lambda) + k_{0} p^* A)|_{P_{y}}\right) 
= H^{0}\left(P_{y}, mL|_{P_{y}}\right) 
\end{align}
is surjective for any $m \gg 1$.  We define the line bundle $M$ by
\begin{align*}
M:=(m-k_{0}) L+ \left(A_{P}- K_{P}\right) + E+m\Lambda \text{ so that }m(L+\Lambda) + k_{0} p^* A = K_{P} + M 
\end{align*}
and equip $M$ with the metric $G:=g_{\e}^{m-k_{0}} g g_{E+m\Lambda}$, where $g_{\e}$ is the metric in condition~\eqref{pm-3}, $g$ is a smooth Hermitian metric on $A_{P}- K_{P}$ with $\sqrt{-1}\Theta_{g} > 0$, and $g_{E+m\Lambda}$ is the singular Hermitian metric induced by the natural section of the effective divisor $E+m\Lambda$.  By construction, we see that $\sqrt{-1}\Theta_{G} > 0$ for any $1 \gg \e > 0$.  Let $\psi$ be a quasi-psh function on $P$ with neat analytic singularities such that the subvariety $V$ defined by $\mathcal{O}_{P}/\I{\psi}$ is $P_{y}$ (see \cite[Definition (2.2)]{Dem16} for neat analytic singularities).  We ensure that the curvature $\sqrt{-1}\Theta_{G} $ satisfies assumption~\eqref{lk2-2}  in Lemma~\ref{lem-key2} by taking $A_{P} $ to be sufficiently ample.  Furthermore, we obtain $\I{G|_{X_{y}}}=\mathcal{O}_{P_{y}}$ by the choice of $y$ and $P_{y} \cap \Supp (E+\Lambda)=\emptyset$.  Hence, by Lemma~\ref{lem-key2}, the restriction map \eqref{sur2} is surjective.

We finally extend sections on $P_{0}$ to $P$.  Since $\codim (X\setminus X_{0}) \geq 2$, we obtain
\begin{align*}
H^{0}(P_{0}, m(L+\Lambda) +k_{0} p^{*}A)\ = \ &H^{0}(X_{0}, p_{*} (m(L+\Lambda) \otimes  k_{0} A)\\
\cong \ &H^{0}(X, p_{*} (m(L+\Lambda) \otimes  k_{0} A)\\
= \ &H^{0}(P, m(L+\Lambda) +k_{0} p^{*}A). 
\end{align*}
Here we use the reflexivity of $p_{*}(m(L+\Lambda))$ to obtain the above isomorphism.  Therefore, the restriction map \eqref{sur} is surjective, finishing the proof.

\eqref{pm-4} $\Rightarrow$ \eqref{pm-5}.   
By the same way as in the proof of \eqref{pm-3} $\Rightarrow$ \eqref{pm-4}, we find an ample line bundle $A_{P} $ on $P$ and an effective $\pi$-exceptional divisor $E$ such that $k_{0}(L+p^* A) = A_{P} + E$ holds.  The non-ample locus $(m-k_{0}) (L + \Lambda) + A_{P}$ is not dominant over $X$ by assumption.  Hence, condition~\eqref{pm-5} follows from
$$
m (L+\Lambda) + k_{0} p^{*} A= (m-k_{0})  (L + \Lambda) + A_{P}
+ k_{0} \Lambda   + E. 
$$

\eqref{pm-5} $\Rightarrow$ \eqref{pm-1}.  
Let $y$ be a general point in $X$.  The fiber $P_{y}$ does not intersect with the non-ample locus of $m (L+\Lambda) + p^{*}A$ since the non-ample locus is a Zariski closed set that is not dominant over $X$ by assumption.  Therefore, we can take a singular Hermitian metric $g$ such that $\sqrt{-1}\Theta_{g}>0$ holds and $g$ is smooth on a neighborhood of the fiber $P_{y}$.  By considering the multiple of $m (L + \Lambda) + p^{*}A$, we may assume that $\sqrt{-1}\Theta_{g}$ is sufficiently positive such that the restriction map
\begin{align*}
H^{0}\left(P, m (L+\Lambda) + p^{*}A\right) \lra H^{0}\left(P_{y}, m (L+\Lambda)|_{P_{y}}\right) 
\end{align*}
is surjective, by the standard extension theorem 
(for example, see \cite[Theorem 1.1]{CDM17} and the proof of \cite[Proposition 4.1]{CCM21}).
This implies that 
$$
p_{*}(m(L +\Lambda) + p^{*}A)=\rmS^{[m]} \mathcal{E} \otimes A
$$
is globally generated at $y$, finishing the proof. 

\eqref{pm-1} $\Rightarrow$ \eqref{pm-6}. 
This implication is obvious. 

\eqref{pm-6} $\Rightarrow$ \eqref{pm-3}.
The proof is almost the same as in that for  \eqref{pm-2} $\Rightarrow$ \eqref{pm-3}.  
The natural morphism 
$$
p^{*} \left(\rmS^{[ab]} \mathcal{E} \otimes (bA)\right) \lra abL  + p^{*}(bA)  
$$
is surjective over $ X_{0}$. 
By assumption, for an integer $a \in \mathbb{Z}_{+}$, we can take an integer $b \in \mathbb{Z}_{+}$ such that $ \rmS^{[ab]} \mathcal{E} \otimes (bA)$ is globally generated at a general point.  In the same way as in the proof for \eqref{pm-2} $\Rightarrow$ \eqref{pm-3}, we see that the induced singular Hermitian metric $G_{a}$ on $abL + p^{*}(bA)|_{P_{0}}$ is smooth along the fiber at a general point and satisfies that $\sqrt{-1}\Theta_{G_{a}} \geq 0$.  Take a smooth Hermitian metric $g$ on $A$ such that $\sqrt{-1}\Theta_{g}$ is a K\"ahler form with local potential.  Then, the metrics $\{(G_{a})^{1/ab} (p^{*}g)^{-1/a}\}_{a\in \mathbb{Z}_{+}}$ provide the desired metrics.
\end{proof}

The following lemma, known to experts, easily follows from the Ohsawa--Takegoshi $L^2$-extension theorem (see \cite{OT87, Man93}).  We give an outline of the proof for the convenience of the reader.

\begin{lemm}\label{lem-key2}
Let $M$ be a line bundle on a smooth projective variety $P$, and let $Z \subset P$ be a Zariski closed subset of\, $P$.  Set $P_{0}:=P\setminus Z$.  Let $h$ be a singular Hermitian metric on $M|_{P_{0}}$ and $\psi$ be a quasi-psh function on $P$ with neat analytic singularities.  We assume the following conditions$:$

\begin{enumerate}
\item\label{lk2-1} The subvariety $V$ defined by\, $\mathcal{O}_{P}/\I{\psi}$ is smooth and satisfies that $V \subset P_{0}$.

\item\label{lk2-2} The inequality $\sqrt{-1}\Theta_{h}+ (1+\delta) \deldel \psi \geq 0$ holds on $P_{0}$ for any $1\gg \delta>0$.
\end{enumerate}

Then, for a section $f \in H^{0}(V, (K_{P}+M)|_{V}\otimes \I{h|_{V}})$, there exists a section $F \in H^{0}(P_{0}, (K_{P}+M)|_{P_{0}})$ such that $F|_{V}=f$.
\end{lemm}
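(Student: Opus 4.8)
The plan is to deduce the lemma from the Ohsawa--Takegoshi $L^2$-extension theorem applied on the \emph{non-compact} manifold $P_0$; the two features that make this non-routine are that $P_0 = P \setminus Z$ is only quasi-projective and that the metric $h$ is defined only over $P_0$. First I would equip $P_0$ with a \emph{complete} Kähler metric $\hat\omega$. This is possible by the classical construction for complements of closed analytic subsets in a compact Kähler manifold: using local generators of the ideal of $Z$ and gluing potentials of the shape $-\log(-\log|\cdot|^2)$, one obtains a Kähler form $\hat\omega = \omega_P + \deldel\Phi$ on $P_0$ that is complete near $Z$, where $\omega_P$ is a fixed Kähler form on $P$. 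Completeness is exactly what allows the Hörmander--Andreotti--Vesentini $L^2$-machinery (density of compactly supported forms, closedness of $\bar\partial$) to be run on $P_0$ as if it were compact.

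Next I would cast the problem in the twisted-canonical form natural to Ohsawa--Takegoshi. The section $f$ to be extended is a section of $(K_P+M)|_V$ lying in $\mathcal{I}(h|_V)$, i.e. it is $L^2$ with respect to $h|_V$ near $V$, which is precisely what the multiplier-ideal hypothesis encodes. Since $\psi$ has neat analytic singularities and $V = V(\mathcal{I}(\psi))$ is smooth, $\psi$ supplies along $V$ exactly the logarithmic pole needed to read off the restriction to $V$, and the factor $(1+\delta)$ in hypothesis~\eqref{lk2-2} provides the standard curvature gain that makes the $L^2$ estimate strictly positive. The curvature hypothesis $\sqrt{-1}\Theta_{h} + (1+\delta)\deldel\psi \geq 0$ then furnishes the Bochner--Kodaira positivity required to solve the associated $\bar\partial$-equation with $L^2$ bounds.

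I would then carry out the extension by the usual cut-off-and-correct argument: choose a smooth extension $\tilde f$ of $f$ on a tubular neighborhood of $V$ contained in $P_0$, set $v := \bar\partial(\chi \tilde f)$ for a cut-off $\chi$ supported in that neighborhood, solve $\bar\partial u = v$ on $(P_0,\hat\omega)$ with the $L^2$ estimate coming from the curvature hypothesis, and put $F := \chi\tilde f - u$. The weight $(1+\delta)\psi$, with its pole along $V$, forces $u$ to vanish on $V$, so that $F|_V = f$; and $F$, being $L^2$ and $\bar\partial$-closed, is a genuine holomorphic section in $H^0(P_0, (K_P+M)|_{P_0})$, with no growth condition imposed near $Z$. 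The hypothesis $V \subset P_0$ (condition~\eqref{lk2-1}) guarantees that the entire construction takes place where $h$ is defined.

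The hard part will be the interplay between completeness and curvature. The metric $\hat\omega$ is needed only to legitimize the $L^2$-machinery on the non-compact $P_0$; to recover the estimate governed by the original curvature hypothesis I expect to run the standard limiting argument, solving the $\bar\partial$-equation with respect to the complete Kähler metrics $\omega_\varepsilon := \omega_P + \varepsilon\hat\omega$ and letting $\varepsilon \to 0$, extracting a weak limit of the solutions from the uniform $L^2$ bounds. This passage to the limit, together with the verification that $h$ and $(1+\delta)\psi$ combine to give genuine positivity after the perturbation, is the standard technical core of Ohsawa--Takegoshi-type arguments on non-compact manifolds, which is why the lemma is only sketched and attributed to \cite{OT87, Man93}.
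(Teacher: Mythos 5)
Your route is genuinely different from the paper's, and the step you defer to as ``standard'' is exactly where the paper's proof does its real work. The paper does not run the $\bar\partial$-machinery on $P_0$ at all: using projectivity, it chooses a smooth hypersurface $H\subset P$ with $Z\subset H$ and $V\not\subset H$, so that $P_{0}\setminus H=P\setminus H$ is Stein, hence weakly pseudoconvex; it applies the standard Ohsawa--Takegoshi theorem there (in the form of \cite[(2.8) Theorem]{Dem16}) to extend $f|_{V\setminus H}$ to a section $F$ on $P\setminus H$ with finite $L^{2}$-norm with respect to $h$; and it then extends $F$ across $H\setminus Z$ by $L^{2}$ Riemann extension, using that the local weight of $h$ is quasi-psh, hence locally bounded above near points of $H\setminus Z$, so that $h$ is bounded below there and the finite $L^{2}$-norm forces local boundedness of $F$. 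No extension across $Z$ is needed or claimed, and $F|_{V}=f$ follows since $V\setminus H$ is dense in $V$.

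Your alternative --- equip $P_{0}$ with a complete K\"ahler metric and redo the twisted $L^{2}$ estimate there --- is the classical from-scratch strategy, and the completeness construction itself is fine. The gap is in the treatment of the singular metric $h$: to justify the a priori estimate and the cut-off-and-correct argument with a weight that is merely a singular Hermitian metric, one must approximate $h$ by smooth metrics whose curvature still essentially satisfies hypothesis~\eqref{lk2-2}, i.e.\ a global decreasing regularization of the quasi-psh local weights preserving the curvature lower bound. Such a regularization is available on Stein (or weakly pseudoconvex) manifolds --- which is precisely why the citable formulations of Ohsawa--Takegoshi assume weak pseudoconvexity --- but not on a general quasi-projective $P_{0}$ carrying only a complete K\"ahler metric; regularization on arbitrary manifolds loses positivity in a way you cannot absorb here, since hypothesis~\eqref{lk2-2} gives no strict positivity to spare. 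So as written you can neither quote the extension theorem as a black box nor appeal to a ``standard'' limiting argument: the hypersurface trick is what reduces the problem to a setting where the theorem genuinely applies, at the small cost of an elementary extension across $H\setminus Z$ afterwards.
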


\begin{proof}
In the case where $P_{0}$ is weakly pseudoconvex, this theorem directly follows from the Ohsawa--Takegoshi $L^2$-extension theorem.  For example, see \cite[(2.8) Theorem]{Dem16} (\textit{cf.}\ \cite{CDM17, ZZ20}) for a formulation similar to this theorem.

The Zariski open set $P_{0}$ is not necessarily weakly pseudoconvex, but we can reduce the proof to this case by the projectivity of $P$.  Indeed, by the projectivity, we can find a smooth hypersurface $H \subset P$ such that $P\setminus H$ is Stein and that $Z \subset H$ and $V \not \subset H$ hold.  Note that $P_{0}\setminus H=P\setminus H$ is weakly pseudoconvex.  Hence, the section $f|_{V \setminus H} \in H^{0}(V \setminus H, (K_{P}+M)|_{V}\otimes \I{h|_{V}})$ is extended to a section $F \in H^{0}(P_{0} \setminus H, (K_{P}+M)|_{P_{0} \setminus H})$ whose $L^{2}$-norm of $F$ with respect to $h$ on $P_{0} \setminus H$ converges.  Fixing a local frame of $K_{P}+M$, we regard $F$ as a holomorphic function locally defined on $P_{0}\setminus H$.  For every point $p \in H \setminus Z$, since the local weigh of $h$ is quasi-psh, the metric $h$ is bounded below on an neighborhood of $p$; thus, the $L^2$-norm of the holomorphic function $F$ converges.  This indicates that $F$ is extended through $H \setminus Z$ by the $L^{2}$-boundedness.  (Note that $F$ is not necessarily extended through $Z$ since $h$ may not be bounded below on a neighborhood of a point in~$Z$.)
\end{proof}

\subsection{Fundamental properties of pseudo-effective sheaves}\label{subsec-2-3}

In this subsection, we provide fundamental properties of pseudo-effective sheaves and compare Definition~\ref{def-psef} to other possible ways to define the pseudo-effectivity.

We first examine the behavior of the pseudo-effectivity for the pull-back.  Let $f\colon X \to Y$ be a fibration between normal projective varieties.  A vector bundle $E$ on $Y$ is nef (resp.\ pseudo-effective) if and only if $f^{*}E$ is nef (resp.\ pseudo-effective).  Let $\mathcal{E}$ be a pseudo-effective torsion-free sheaf on $Y$.  Then, the pull-back $f^{*}\mathcal{E}$ is not necessarily torsion-free.  Even if we consider the quotient $ (f^{*} \mathcal{E}/\tor)$ by the torsion subsheaf of $f^{*}\mathcal{E}$, it is not pseudo-effective in general (see Example~\ref{ex} below).  However, Proposition~\ref{prop-basic} below shows that the converse implication is true; that is, the sheaf $\mathcal{E}$ is pseudo-effective if $ (f^{*} \mathcal{E}/\tor)$ is pseudo-effective.  Proposition~\ref{prop-basic} is applied when we prove Theorem~\ref{theo-main} or compare Definition~\ref{def-psef} to other definitions of the pseudo-effectivity.

\begin{prop}\label{prop-basic}
Let $f\colon X \dashrightarrow Y$ be an almost holomorphic map between normal projective varieties, and let $\cal E$ and $\cal F$ be torsion-free sheaves on $X$ and $Y$, respectively.  Assume that there exists a Zariski open set $Y_{0} \subset Y$ with $\codim (Y \setminus Y_{0}) \geq 2$ such that
\begin{itemize}
\item $f\colon X \dashrightarrow Y$ is an $($everywhere defined\,$)$ fibration over $Y_{0}$ and 
\item $f^{*} \cal F=\cal E$ holds over $Y_{0}$. 
\end{itemize}
Then, the sheaf $\cal F$ is pseudo-effective if $\cal E$ is pseudo-effective. 
\end{prop}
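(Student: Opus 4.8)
\subsection*{Proof proposal}

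The plan is to pass to the projectivizations and apply the non-nef-locus characterization~\eqref{pm-3} of Proposition~\ref{prop-main}. Write $\pi_{\cal E}\colon\bb P(\cal E)\to X$ and $\pi_{\cal F}\colon\bb P(\cal F)\to Y$ for the projectivizations, with hyperplane bundles $\cal O_{\bb P(\cal E)}(1)$ and $L_{\cal F}:=\cal O_{\bb P(\cal F)}(1)$. Since $\cal F$ is torsion-free on the normal variety $Y$, it is locally free in codimension one, so after shrinking $Y_0$ (keeping $\codim(Y\setminus Y_0)\ge 2$) we may assume $\cal F|_{Y_0}$ is locally free, that $f$ is a morphism over $Y_0$, and that $f^{*}\cal F=\cal E$ there. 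Over $Y_0$ the standard identification $\bb P(f^{*}\cal F)\cong X\times_Y\bb P(\cal F)$ holds, so $\bb P(\cal E)\cong X\times_Y\bb P(\cal F)$ over $Y_0$, with $\pi_{\cal E}$ the first projection and the second projection $\tilde f\colon\bb P(\cal E)\to\bb P(\cal F)$ satisfying $\tilde f^{*}L_{\cal F}=\cal O_{\bb P(\cal E)}(1)$. The structural point is that for $v\in\bb P(\cal F)$ lying over $y\in Y_0$ the fibre $\tilde f^{-1}(v)$ is canonically the fibre $f^{-1}(y)$; in particular $\tilde f$ is itself a fibration over $Y_0$.

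Since $\cal E$ is pseudo-effective, \eqref{pm-3} provides, for every $\e>0$, a singular Hermitian metric $g_{\e}$ on $\cal O_{\bb P(\cal E)}(1)$ over the good locus with $\sqrt{-1}\Theta_{g_{\e}}\ge-\e\,\omega$ whose Lelong-number locus $Z_{\e}$ is not dominant over $X_0$. I want to descend these to metrics on $L_{\cal F}$ verifying \eqref{pm-3} for $\cal F$. As $\tilde f$ is a fibration rather than a finite map, fibre integration would drop the bidegree; instead I cut down by a general multisection. Choose a general complete intersection $D\subset f^{-1}(Y_0)$ of ample divisors with $\dim D=\dim Y$, so that $f|_{D}\colon D\to Y_0$ is finite surjective, and set $W:=D\times_Y\bb P(\cal F)\subset\bb P(\cal E)$. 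Then $\nu:=\tilde f|_{W}\colon W\to\bb P(\cal F)$ is finite surjective of degree $d:=\deg(f|_{D})$, and $\cal O_{\bb P(\cal E)}(1)|_{W}=\nu^{*}L_{\cal F}$.

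Now restrict and push forward currents. For general $D$ the restricted weight of $g_{\e}$ is well defined, the closed positive $(1,1)$-current $T_{\e}:=\big(\sqrt{-1}\Theta_{g_{\e}}+\e\,\omega\big)\big|_{W}\ge 0$ represents $\nu^{*}c_1(L_{\cal F})+\e[\omega|_{W}]$, and generic restriction does not enlarge the Lelong super-level sets, so the Lelong locus of $T_{\e}$ is contained in $Z_{\e}\cap W$. Pushing forward by the finite map $\nu$ and dividing by $d$ yields a closed positive current $S_{\e}:=\tfrac1d\,\nu_{*}T_{\e}\ge 0$ in the class $c_1(L_{\cal F})+\e\,\beta$, where $\beta:=\tfrac1d\,\nu_{*}[\omega|_{W}]$ is a fixed positive class independent of $\e$. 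Fixing a Kähler form $\omega'$ on $\bb P(\cal F)$ with $\beta\le[\omega']$, the current $S_{\e}$ defines a metric $g'_{\e}$ on $L_{\cal F}$ with $\sqrt{-1}\Theta_{g'_{\e}}\ge-\e\,\omega'$. For the Lelong locus, finiteness of $\nu$ gives $\{\nu(g'_{\e},\cdot)>0\}\subseteq\nu\big(Z_{\e}\cap W\big)$, and by $\pi_{\cal F}\circ\nu=f\circ(\pi_{\cal E}|_{W})$ together with $\pi_{\cal E}(W)=D$ this image lands under $\pi_{\cal F}$ inside $f\big(\pi_{\cal E}(Z_{\e})\cap D\big)$. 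As $Z_{\e}$ is not dominant over $X_0$, the set $\pi_{\cal E}(Z_{\e})$ lies in a proper closed subset of $X$, so for general $D$ the intersection $\pi_{\cal E}(Z_{\e})\cap D$ has dimension $<\dim Y$, and since $f|_{D}$ is finite its image is not dense in $Y$. Hence the Lelong locus of $g'_{\e}$ is not dominant over $Y_0$, the family $\{g'_{\e}\}$ verifies \eqref{pm-3} for $\cal F$, and $\cal F$ is pseudo-effective by Proposition~\ref{prop-main}.

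The main obstacle is exactly this descent from $\cal O_{\bb P(\cal E)}(1)=\tilde f^{*}L_{\cal F}$ to $L_{\cal F}$: an arbitrary metric on the pullback need not be a pullback, so one cannot simply invoke functoriality of non-nef loci. The multisection-plus-finite-pushforward device circumvents this while preserving the bidegree, and the delicate points are that generic restriction to $W$ keeps the Lelong locus from spreading out and that finiteness of $f|_{D}$ converts ``not dominant over $X_0$'' into ``not dominant over $Y_0$'' via the dimension count above.
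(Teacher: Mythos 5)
Your argument is correct in outline and in fact shares the paper's key reduction---cutting down by general ample divisors so that the map becomes (generically) finite over $Y_{0}$---but the descent mechanism is genuinely different. The paper works with condition~\eqref{pm-6} of Proposition~\ref{prop-main} (generic global generation of $\rmS^{[ab]}\cal{E}\otimes(bA)$) and descends it algebraically, following \cite[Lemma~2.2]{EIM}: after replacing $X$ by a complete intersection $X\cap H_{1}\cap\cdots\cap H_{k}$ so that $f$ becomes generically finite and flat at the chosen general point $y$, it pushes the evaluation map forward by $f_{*}$, applies the projection formula, and contracts with the trace pairing $(f_{*}\mathcal O_X)^{*}\otimes f_{*}\mathcal O_X\to\mathcal O_Y$ to produce generating sections of $\rmS^{[ab]}\cal{F}\otimes((b+n)B)$ at $y$. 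You instead work with condition~\eqref{pm-3} and descend analytically, restricting the curvature current to the multisection $W$ and taking $\tfrac1d\nu_{*}$. The algebraic route avoids all current-theoretic technicalities; your route stays at the level of non-nef loci and would adapt better to settings where sections are scarce.

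Two steps need repair before your version is complete. First, ``fixing a K\"ahler form $\omega'$ with $\beta\le[\omega']$, the current $S_{\e}$ defines a metric $g'_{\e}$ with $\sqrt{-1}\Theta_{g'_{\e}}\ge-\e\omega'$'' conflates a cohomology-class inequality with a pointwise inequality of currents: the actual error term is $\tfrac{\e}{d}\nu_{*}(\omega|_{W})$, a closed positive \emph{current} whose density blows up along the branch locus of $\nu$ (already for $z\mapsto z^{2}$ on $\mathbb{C}$ the pushforward of a smooth form has density comparable to $|w|^{-1}$), so it is not dominated by any smooth form. The fix is to take a smooth representative $\gamma$ of $\beta$ and use the current $S_{\e}-\e\gamma\in c_{1}(L_{\cal F})$, which is $\ge-\e\gamma\ge-\e C\omega'$ and has the same Lelong numbers as $S_{\e}$. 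Second, ``finiteness of $\nu$ gives $\{\nu(g'_{\e},\cdot)>0\}\subseteq\nu(Z_{\e}\cap W)$'' is true but not automatic, since preimages of small balls are only H\"older-small and the naive mass estimate fails; one can prove it by writing $\nu_{*}\varphi$ as the fibrewise trace, using the arithmetic--geometric mean inequality and a change of variables to get $e^{-2m\nu_{*}\varphi}\in L^{1}_{\mathrm{loc}}$ for all $m$, and invoking Skoda's lemma. With these two points supplied (and the routine caveats that $f|_{D}$ is only generically finite over $Y_{0}$ and that ``general'' must be ``very general'' because $Z_{\e}$ is a countable union of analytic sets), your proof goes through.
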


\begin{proof}
We assume that $\cal F$ is locally free on $Y_{0}$ by replacing $Y_{0}$ with $Y_{0} \cap Y_{\cal F}$, where $Y_{\cal F}$ is the maximal locally free locus of $\cal F$.

Let $y$ be a general point in $Y_{0}$.  Let $A$ and $B$ be ample Cartier divisors on $X$ and $Y$, respectively.  By assumption, for an integer $a \in \mathbb{Z}_{+}$, there exists an integer $b \in \mathbb{Z}_{+}$ such that
$$
\Bs_{(a,b)}(\mathcal{E})
:=\left\{x \in X \,|\, \text{the stalk of }  \rmS^{[ab]} \cal{E} \otimes (bA) \text{ at } x 
\text{ is not globally generated}
\right\}
$$
is a proper Zariski closed set in $X$.  From this condition, we will show that for any $a \in \mathbb{Z}_{+}$, there exists an integer $b \in \mathbb{Z}_{+}$ such that the stalk of $\rmS^{[ab]} \cal{F} \otimes(bB)$ at $y\in Y$ is generated by a section in $H^{0}(Y_{0}, \rmS^{[ab]} \cal{F} \otimes(bB))$.  This finishes the proof by condition~\eqref{pm-6} in Proposition~\ref{prop-main} since such a section is automatically extended to $Y$ by the reflexivity and since $\codim (Y \setminus Y_{0}) \geq 2$.  To this end, following \cite[Lemma 2.2]{EIM}, we will reduce our situation to the case where $f\colon X \dashrightarrow Y$ is an everywhere defined and generically finite morphism such that $X_{y}:=f^{-1}(y)$ does not intersect with $\Bs_{(a,b)}(\mathcal{E})$.

We may assume that $f $ is an everywhere defined fibration by replacing $f\colon X \dashrightarrow Y$ with $f\colon X_{0}:=f^{-1}(Y_{0}) \to Y_{0}$.  Note that $\Bs_{(a,b)}(\mathcal{E})$ is still a proper Zariski closed set in $X$ since $\Bs_{(a,b)}(\mathcal{E}|_{X_{0}}) \subset \Bs_{(a,b)}(\mathcal{E}) \cap X_{0}$.  Both $X$ and $Y$ are non-compact, but this does not affect in the argument below.

We now check that we may assume that $f\colon X \to Y$ is a generically finite morphism.  Let $k$ be the fiber dimension of $f\colon X \to Y$.  Since $y$ is a general point, we see that $\dim( \Bs_{(a,b)}(\mathcal{E}) \cap X_{y}) < k$ and the fibration $f\colon X \to Y$ is flat over $y$.  For general hypersurfaces $\{H_{i} \}_{i=1}^{k}$ on $X$, we replace $X$ with the complete intersection $X':=X \cap H_{1} \cap \cdots \cap H_{k}$.  Then, since $k$ is the fiber dimension of $f\colon X \to Y$, the replaced fibration $f\colon X \to Y$ is a generically finite morphism.  Note that $f\colon X \to Y$ is flat over $y$; furthermore, the fiber $X_{y}$ does not intersect with $\Bs_{(a,b)}(\mathcal{E}) $ since $\Bs_{(a,b)}(\mathcal{E}|_{X'}) \subset \Bs_{(a,b)}(\mathcal{E}) \cap X' $ and $\dim( \Bs_{(a,b)}(\mathcal{E}) \cap X_{y}) < k$.

The generically finite morphism $f\colon X \to Y$ is finite at $y$; hence we may assume that $A$ and $B$ are effective divisors and $X_{y} \cap \mathrm{Supp}(g^*B-A) = \emptyset$ by replacing the ample Cartier divisors $A$ and $B$ if necessary.  By the definition of $\Bs_{(a,b)}(\mathcal{E}) $ and the relation $ X_{y} \cap \Bs_{(a,b)}(\mathcal{E}) = \emptyset$, the sheaf $\rmS^{[ab]} \cal{E} \otimes (bA)$ is globally generated at any points in $X_y$; hence so is $\rmS^{[ab]} \cal{E} \otimes (bf^{*}B)$ since $X_{y} \cap \mathrm{Supp}(g^*B-A) = \emptyset$.  Thus, we obtain a morphism that is surjective on $X_{y}$:
$$\alpha\colon  \bigoplus  \mathcal O_X \lra   \rmS^{[ab]} \cal{E} \otimes (bf^{*}B). 
$$ 
Since $f\colon X \to Y$ is affine over a neighborhood of $y$, the morphism induced by the push-forward
$$
\beta \colon  
\bigoplus f_*\mathcal O_X 
\xrightarrow{f_*(\alpha)}
f_*\mathcal O_X\left(\rmS^{[ab]} \cal{E} \otimes (bf^{*}B)\right)
\cong \left(f_*\mathcal O_X\right) \otimes \mathcal O_Y\left(\rmS^{[ab]} \cal{F} \otimes (bB)\right)
$$
is surjective at $y$.  Here, the isomorphism on the right-hand side follows from the projection formula and $\rmS^{[ab]} \cal{E}=f^{*}\rmS^{ab} \cal{F}$ by noting that we have already replaced the original variety $Y$ with $Y_{0}$.  Furthermore, since $f_*\mathcal O_X$ is locally free at $y$, the natural pairing
$$
\gamma \colon  (f_*\mathcal O_X )^* \otimes f_*\mathcal O_X \lra \mathcal O_Y
$$
is surjective at $y$.  
Take $n\in\mathbb Z_{+}$ such that 
$
\left( (f_*\mathcal O_X)^* \otimes f_*\mathcal O_X \right)\otimes(nB)
$
is globally generated. 
The above argument implies that the following morphism  is surjective at $y$: 
\begin{align*}
\bigoplus 
\left( (f_*\mathcal O_X)^{*} 
\otimes 
f_*\mathcal O_X \right)\otimes(nB)
\ \cong \ &
(f_*\mathcal O_X)^{*} 
\otimes 
\left( \bigoplus  f_*\mathcal O_X \right)\otimes(nB)
\\ \xrightarrow{\textup{induced by $\beta$}}  
& \ \left( (f_*\mathcal O_X)^{*} \otimes f_*\mathcal O_X \right)\otimes
\left( \rmS^{[ab]} \cal{F} \otimes ((b+n)B)) \right) 
\\ \xrightarrow{\textup{induced by $\gamma$}}  
& \ \mathcal \rmS^{[ab]} \cal{F} \otimes \left( (b+n)B \right). 
\end{align*}
Hence, the stalk of $\rmS^{[ab]} \cal{F} \otimes((b+n)B)$ at $y$ is generated by global sections, finishing the proof.
\end{proof}

In the remainder of this subsection, we observe other possible ways to define the pseudo-effectivity.  One approach of defining the pseudo-effectivity of a torsion-free sheaf $\mathcal{E}$ is to use a birational morphism $\alpha \colon \tilde{X} \to X$ such that the quotient $ (\alpha^{*} \mathcal{E}/\tor)$ by the torsion subsheaf of the pull-back $\alpha^{*} \mathcal{E}$ is locally free.  Another approach is to use $L=\pi^{*} \cal{O}_{\bb{P}(\cal{E})}(1)$ instead of $L+\Lambda$ in Setting~\ref{setup}.  The following proposition shows that these definitions are stronger than Definition~\ref{def-psef}.

\begin{prop}\label{prop-cor}
Let $\mathcal{E}$ be a torsion-free sheaf on a normal projective variety $X$.
\begin{enumerate}
  \item\label{pc-1} If the non-nef locus of\, $L$ is not dominant over $X$, then $\mathcal{E}$ is pseudo-effective.

\item\label{pc-2} Let $\alpha \colon \tilde{X} \to X$ be a birational morphism such that the quotient $ (\alpha^{*} \mathcal{E}/\tor)$ by the torsion subsheaf of the pull-back $\alpha^{*} \mathcal{E}$ is locally free.  If\, $(\alpha^{*} \mathcal{E}/\tor)$ is pseudo-effective, then $\mathcal{E}$ is pseudo-effective.
\end{enumerate}
\end{prop}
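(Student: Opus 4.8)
The plan is to deduce both statements from the characterizations already established: part~\eqref{pc-1} from Proposition~\ref{prop-main} and part~\eqref{pc-2} from Proposition~\ref{prop-basic}. In each case the real content is just to verify that the hypothesis of the cited result holds. Throughout I keep the notation of Setting~\ref{setup}, and for a divisor $D$ on $P$ I write $\mathbf{B}_-(D)$ for its non-nef locus, freely using the standard formal properties recorded in \cite{Bou04, ELMNP06, ELMNP09}.

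For part~\eqref{pc-1}, I would show that the non-nef locus of $L+\Lambda$ is not dominant over $X$ and then invoke the equivalence \eqref{pm-2} $\Leftrightarrow$ \eqref{pm-4} of Proposition~\ref{prop-main}. Since $\Lambda$ is effective, subadditivity of the non-nef locus together with $\mathbf{B}_-(\Lambda)\subset\Supp(\Lambda)$ gives
\[
\mathbf{B}_-(L+\Lambda) \subset \mathbf{B}_-(L) \cup \mathbf{B}_-(\Lambda) \subset \mathbf{B}_-(L) \cup \Supp(\Lambda).
\]
By hypothesis $\mathbf{B}_-(L)$ is not dominant over $X$, and $\Supp(\Lambda)$ is not dominant over $X$ because $\Lambda$ is $p$-exceptional, so $p(\Supp\Lambda)$ has positive codimension in $X$. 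Hence $p(\mathbf{B}_-(L+\Lambda))$ lies in a proper closed subset of $X$, i.e. $\mathbf{B}_-(L+\Lambda)$ is not dominant over $X$, which is exactly condition~\eqref{pm-4}. Proposition~\ref{prop-main} then yields that $\mathcal{E}$ is pseudo-effective.

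For part~\eqref{pc-2}, the strategy is to recognize $\alpha$ as an instance of the map in Proposition~\ref{prop-basic}. The key geometric input is that, $X$ being normal and $\alpha$ proper birational, $\alpha$ is an isomorphism in codimension one: there is a Zariski open $X_0 \subset X$ with $\codim(X \setminus X_0) \geq 2$ over which $\alpha$ is an isomorphism, and shrinking $X_0$ we may also assume $X_0 \subset X_{\reg}$ with $\mathcal{E}|_{X_0}$ locally free. Over $X_0$ the pullback $\alpha^{*}\mathcal{E}$ is already locally free, hence torsion-free, so $(\alpha^{*}\mathcal{E}/\tor)|_{X_0} = \alpha^{*}(\mathcal{E}|_{X_0})$ is identified with $\mathcal{E}|_{X_0}$ through the isomorphism $\alpha$. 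This places us exactly in the situation of Proposition~\ref{prop-basic}, applied to the everywhere-defined fibration $\alpha\colon \tilde{X} \to X$ with $Y_0 = X_0$, the pseudo-effective sheaf $(\alpha^{*}\mathcal{E}/\tor)$ on $\tilde{X}$, and $\mathcal{F} = \mathcal{E}$ on $X$; the proposition then gives that $\mathcal{E}$ is pseudo-effective. If $\tilde{X}$ fails to be normal, I would first pass to its normalization $\nu\colon \tilde{X}^{\nu} \to \tilde{X}$: the bundle $\nu^{*}(\alpha^{*}\mathcal{E}/\tor)$ is again locally free, and it remains pseudo-effective because pulling back the defining metrics along the finite map $\nu$ only perturbs the curvature lower bound by $\nu^{*}\omega$, which is dominated by a Kähler form on $\tilde{X}^{\nu}$; one then argues with $\alpha\circ\nu$ in place of $\alpha$.

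The routine steps are the formal properties of non-nef loci in~\eqref{pc-1} and the sheaf-theoretic identifications over $X_0$. The main obstacle is meeting the codimension hypothesis of Proposition~\ref{prop-basic}: this is precisely where normality of $X$ enters, through the fact that a proper birational morphism onto a normal variety is an isomorphism away from a closed set of codimension at least two. Checking that pseudo-effectivity survives the normalization of $\tilde{X}$ (so that $\tilde{X}$ may be assumed normal) is the only additional point requiring care.
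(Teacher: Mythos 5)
Your proposal is correct and follows essentially the same route as the paper: part (1) is exactly the inclusion $\mathbb{B}_{-}(L+\Lambda)\subset \mathbb{B}_{-}(L)\cup \Supp(\Lambda)$ combined with condition~\eqref{pm-4} of Proposition~\ref{prop-main}, and part (2) is the paper's direct application of Proposition~\ref{prop-basic} to $\alpha$, with your verification of the codimension-two condition and the normalization step being the details the paper leaves implicit.
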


\begin{proof}
Conclusion~\eqref{pc-1} follows from $ \mathbb{B}_{-}(L+\Lambda) \subset \mathbb{B}_{-}(L) \cup \Lambda$ and condition~\eqref{pm-4} in Proposition~\ref{prop-main}.  Conclusion~\eqref{pc-2} is a direct consequence of Proposition~\ref{prop-basic}.
\end{proof}

The following examples show that the converse implications of Proposition~\ref{prop-cor} are not true in general.

\begin{ex}\label{ex}\leavevmode
\begin{enumerate}
\item\label{ex-1} Let $X$ be a smooth projective variety.  We consider the ideal sheaf $\mathcal{E}:=\mathcal{I}_{Z}$ defined by a smooth subvariety $Z \subset X$ of codimension at least $2$ and the blow-up $\alpha \colon \tilde{X} \to X$ along $Z$.  Then, the quotient $ (f^{*} \mathcal{E}/\tor)$ by the torsion subsheaf is the invertible sheaf $\mathcal{O}_{\tilde X}(-E)$ associated to an effective $\alpha$-exceptional divisor $E$.  The sheaf $\mathcal{E}:=\mathcal{I}_{Z}$ is obviously pseudo-effective since $\rmS^{[ab]} (\mathcal{I}_{Z})=\mathcal{O}_{X}$, but $\mathcal{O}_{\tilde X}(-E)$ is not pseudo-effective.  The blow-up $\alpha \colon \tilde{X} \to X$ along $Z$ coincides with $\mathbb{P}(\mathcal{E}) \to X$; hence $P$ in Setting~\ref{setup} can be chosen to be $P=\tilde{X}=\mathbb{P}(\mathcal{E})$.  Furthermore, we see that $\mathcal{O}_{\mathbb{P}(\mathcal{E}) }(1)=\mathcal{O}_{\mathbb{P}(\mathcal{E})}(-E)$ and $\Lambda=E$.  Then the line bundle $L+\Lambda$ is trivial (and thus pseudo-effective), but $\mathcal{O}_{\mathbb{P}(\mathcal{E}) }(1)=\mathcal{O}_{\mathbb{P}(\mathcal{E})}(-E)$ is not pseudo-effective.

\item\label{ex-2}  This example is due to \cite[Remark 2.7]{Gac}: Let $\cal E$ be the tangent sheaf $T_X$ of a singular Kummer surface $X$ in \cite[Remark 2.7]{Gac}.  Then, there exists a sheaf $\mathcal{F}$ on $X$ such that $\cal E = \cal F \oplus \cal F$ and $\cal F^{\otimes 2} = \mathcal{I}_{X_{\sing}}$; hence, the reflexive hull $\rmS^{[2a]}(\cal E)$ is a trivial vector bundle, which indicates that $\cal E$ is pseudo-effective.  Nevertheless, since $\cal F^{\otimes 2} = \mathcal{I}_{X_{\sing}}$ and by the same argument as in~\eqref{ex-1}, we see that neither $(\alpha^{*} \mathcal{E}/\tor)$ nor $L$ is pseudo-effective.
\end{enumerate}

\end{ex}

We finally consider the pseudo-effectivity of $\bb Q$-Cartier divisors on normal projective varieties.

\begin{prop}\label{prop-cor2}
Let $D$ be a Weil divisor on a normal projective variety $X$ and $\mathcal{E}$ be the sheaf associated to the Weil divisor $D$.  Assume that $D$ is $\bb Q$-Cartier.  Then, the sheaf $\mathcal{E}$ is pseudo-effective in the sense of Definition~\ref{def-psef} if and only if $D$ is pseudo-effective as a $\bb Q$-Cartier divisor.
\end{prop}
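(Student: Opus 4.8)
The plan is to reduce both implications to the characterizations of pseudo-effectivity in Proposition~\ref{prop-main}, exploiting that for the rank-one reflexive sheaf $\mathcal{E}=\mathcal{O}_X(D)$ one has $\mathrm{S}^{[m]}\mathcal{E}=\mathcal{O}_X(mD)$ for every $m\in\mathbb{Z}_{+}$. Thus the global generation statements in conditions~\eqref{pm-1} and~\eqref{pm-6} of Proposition~\ref{prop-main} become statements about the reflexive sheaves $\mathcal{O}_X(mD+A)$ attached to the $\bb{Q}$-Cartier divisors $mD+A$, and I will match these with the numerical characterization of pseudo-effectivity of $\bb{Q}$-Cartier divisors.

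First I would prove that $D$ pseudo-effective implies $\mathcal{E}$ pseudo-effective, using condition~\eqref{pm-6}. Fix an ample line bundle $A$ and an integer $a\in\mathbb{Z}_{+}$. Since $D$ is pseudo-effective and $\tfrac{1}{a}A$ is ample, the $\bb{Q}$-Cartier divisor $aD+A=a(D+\tfrac{1}{a}A)$ is big. Hence, choosing $b\in\mathbb{Z}_{+}$ large and divisible enough that $b(aD+A)=abD+bA$ is an integral Cartier divisor, the general point of $X$ lies off the stable base locus of $b(aD+A)$, so $\mathcal{O}_X(abD+bA)=\mathrm{S}^{[ab]}\mathcal{E}\otimes(bA)$ is globally generated at a general point. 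This is exactly condition~\eqref{pm-6}, so $\mathcal{E}$ is pseudo-effective by Proposition~\ref{prop-main}.

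Conversely, I would deduce pseudo-effectivity of $D$ from that of $\mathcal{E}$ via condition~\eqref{pm-1}. If $\mathcal{E}$ is pseudo-effective, there is an ample line bundle $A$ such that $\mathcal{O}_X(mD+A)=\mathrm{S}^{[m]}\mathcal{E}\otimes A$ is globally generated at a general point, hence admits a nonzero global section, for every $m\in\mathbb{Z}_{+}$. Such a section exhibits $mD+A$ as linearly equivalent to an effective Weil divisor; being $\bb{Q}$-Cartier, its class lies in the pseudo-effective cone, so $D+\tfrac{1}{m}A$ is pseudo-effective for every $m$. Letting $m\to\infty$, the classes $D+\tfrac{1}{m}A$ converge to $D$ in $N^{1}(X)_{\mathbb{R}}$, and since the pseudo-effective cone is closed, $D$ is pseudo-effective.

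The routine inputs are standard facts about $\bb{Q}$-Cartier divisors on normal projective varieties: pseudo-effective plus ample is big, a big divisor is globally generated at a general point after passing to a large Cartier multiple, and the pseudo-effective cone is closed. The main point to handle carefully is the compatibility between the sheaf-theoretic notion (global generation of the reflexive sheaves $\mathcal{O}_X(mD+A)$ at a general point) and the numerical notion of pseudo-effectivity of divisors; this is where the $\bb{Q}$-Cartier hypothesis is essential, since without it one can neither pass from a section of $\mathcal{O}_X(mD+A)$ to a well-defined pseudo-effective class nor run the bigness-to-global-generation step.
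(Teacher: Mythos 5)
Your proof is correct and follows essentially the same route as the paper: both directions reduce to Proposition~\ref{prop-main} via the identification $\rmS^{[m]}\mathcal{E}\cong\mathcal{O}_X(mD)$, and the implication from pseudo-effectivity of $\mathcal{E}$ to that of $D$ is obtained from condition~\eqref{pm-1} exactly as you do it. The only (harmless) difference is in the converse direction, where you verify condition~\eqref{pm-6} by using that $aD+A$ is big and hence generically base-point free for a suitable Cartier multiple, whereas the paper verifies condition~\eqref{pm-1} directly by writing $m=qm_{0}+r$ and tensoring a section of $\mathcal{O}_X(qm_{0}D)\otimes A$ with generators of $\rmS^{[r]}\mathcal{E}\otimes A$.
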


\begin{proof}
Recall that $D$ is said to be {\textit{pseudo-effective}} (as a $\bb Q$-Cartier divisor) if there exist an ample line bundle $A$ 
and an integer $m_{0} \in \mathbb{Z}_{+}$ with  $m_{0}D$ Cartier such that $km_{0}D+A$ has a non-zero section for any $k \in \mathbb{Z}_{+}$.

Fix an integer $m_{0} \in \mathbb{Z}_{+}$ with  $m_{0}D$ Cartier.  Then, we have $\rmS^{[km_{0}]} \mathcal{E} \cong \mathcal{O}_{X}(km_{0} D)$.  Hence, condition~\eqref{pm-1} in Proposition~\ref{prop-main} implies that $D$ is pseudo-effective as a $\bb Q$-Cartier divisor.

To prove the converse implication, we take an ample line bundle $A$ such that $km_{0}D+A$ has a non-zero section for any $k \in \mathbb{Z}_{+}$.  We may assume that $\rmS^{[r]} \mathcal{E}\otimes A $ is globally generated for any $0 \leq r < m_{0}$.  For a given integer $m \in \mathbb{Z_{+}}$, after taking $q$ and $r$ such that $m=q m_{0}+r$ and $0 \leq r < m_{0}$, we obtain
$$\rmS^{[m]} \mathcal{E} =\mathcal{O}_X(qm_{0} D )\otimes 
\rmS^{[r]} \mathcal{E}. 
$$ 
Therefore, the sheaf $\rmS^{[m]} \mathcal{E} \otimes 2A$ has a non-zero section; thus it is generically globally generated.
\end{proof}

\section{MMP for varieties with pseudo-effective tangent sheaf}\label{Sec-3}

\subsection{Fibrations and pseudo-effective tangent sheaves}\label{subsec-3-1}

In this subsection, we consider the behavior of the pseudo-effectivity of tangent sheaves under birational maps or fibrations.  The tangent sheaf $T_{X}$ of a normal variety $X$ is defined by the reflexive hull:
$$T_{X}:=\left( j_{*} T_{X_{\reg}} \right)^{**} :=
\left( j_{*} \mathcal{O}_{X_{\reg}}(T_{X_{\reg}}) \right)^{**}, 
$$
where $T_{X_{\reg}}$ is the tangent bundle on the non-singular locus $X_{\reg}$ and $j\colon X_{\reg} \to X$ is the natural inclusion.  Note that $ (\pi_{*}T_{\tilde{X}})^{**} = T_{X}$ holds for any resolution $\alpha\colon \tilde {X} \to X$ of singularities of $X$.

The following propositions essentially follow from Proposition~\ref{prop-basic}.

\begin{prop}\label{prop-bir}
Let $X \dashrightarrow Y$ be a birational map between normal projective varieties.  Then, if the tangent sheaf $T_{X} $ of\, $X$ is pseudo-effective, so is the tangent sheaf\, $T_{Y} $ of\, $Y$.
\end{prop}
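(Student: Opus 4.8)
The plan is to apply Proposition~\ref{prop-basic} to the birational map itself, taking $\mathcal E=T_X$ and $\mathcal F=T_Y$. Write $\phi\colon X\dashrightarrow Y$ for the given map and $\psi=\phi^{-1}$ for its inverse. First I would invoke the elementary fact that a rational map from a normal variety to a projective variety is defined off a closed set of codimension $\geq 2$; hence $\psi$ is a morphism on $Y\setminus I_{\psi}$ with $\codim_{Y}I_{\psi}\geq 2$. On the dense open locus $V\subseteq Y\setminus I_{\psi}$ where $\psi$ is an isomorphism onto an open $U\subseteq X$ — which I may shrink so that $V\subseteq Y_{\reg}$ and $\psi(V)\subseteq X_{\reg}$ — the map $\phi|_{U}\colon U\to V$ is an isomorphism of smooth varieties, so $\phi^{*}T_{Y}=T_{X}$ holds over $V$ because both tangent sheaves restrict to the honest tangent bundles there.

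The cleanest case is when $\psi$ contracts no divisor, i.e. $\codim_{Y}(Y\setminus V)\geq 2$; this already covers the maps arising in the MMP. If $\phi$ is a birational morphism then, $Y$ being normal, the image of its exceptional locus has codimension $\geq 2$, and if $\phi$ is small (a flip) then both $\phi$ and $\psi$ are isomorphisms outside codimension $2$. In either situation I set $Y_{0}:=V$; then $\phi$ is an everywhere defined fibration (indeed an isomorphism) over $Y_{0}$ with $\phi^{*}T_{Y}=T_{X}$, so Proposition~\ref{prop-basic} applies verbatim and gives that $T_{Y}$ is pseudo-effective.

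The main obstacle is the general case, where $\psi$ may contract a divisor $D\subseteq Y$ (for instance when $\phi$ is the inverse of a divisorial contraction): then $D$ is disjoint from the image of $\phi$, so $\phi$ cannot be a fibration over any $Y_{0}$ meeting $D$, and $Y\setminus V$ has codimension $1$, breaking the hypothesis of Proposition~\ref{prop-basic}. To circumvent this I would replace the direct sheaf-pullback input by the non-nef locus characterization of Proposition~\ref{prop-main}, namely condition~\eqref{pm-4}. The key point is that pseudo-effectivity is there tested by a \emph{generic} condition — non-dominance over the base of the non-nef locus of $L+\Lambda$ on a resolution of $\mathbb{P}(\mathcal E)$ — which only needs to hold over a dense open subset. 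Since $T_{X}|_{U}\cong T_{Y}|_{V}$, the projectivized bundles $\mathbb{P}(T_{X})$ and $\mathbb{P}(T_{Y})$ together with their tautological bundles are canonically identified over $U\cong V$, and over this smooth locus the resolutions are isomorphisms. As $V$ is dense in $Y$, the non-dominance of the non-nef locus on the $X$-side transfers to the $Y$-side, the contracted divisor $D$ being irrelevant precisely because it is not dominant over $Y$. The one technical point to verify is that the $p$-exceptional correction divisor $\Lambda$ on the $Y$-side does not interfere — which is clear since it is disjoint from general fibers — and this is exactly where Proposition~\ref{prop-main} is needed in place of the naive pullback argument.
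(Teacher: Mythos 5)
Your first two paragraphs are, in substance, the paper's proof. The author disposes of Proposition~\ref{prop-bir} in one line by invoking Proposition~\ref{prop-basic}, and the content of that invocation is exactly your reduction: take $Y_0$ inside the locus where $\phi^{-1}$ is an isomorphism onto an open subset of $X_{\reg}$, observe that $\codim(Y\setminus Y_0)\geq 2$, and note that $\phi^{*}T_Y=T_X$ there. As you say, this works precisely when $\phi^{-1}$ contracts no divisor, which covers every birational map to which the proposition is actually applied in the proof of Theorem~\ref{theo-main} (composites of divisorial contractions and flips).

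Your third paragraph, however, contains a genuine error, and it cannot be repaired: when $\phi^{-1}$ contracts a divisor, the conclusion itself is false. Take $\phi\colon \mathbb{P}^2\dashrightarrow Y$ to be the inverse of the blow-up of $\mathbb{P}^2$ at $k\gg 0$ suitably general points (e.g.\ $k\geq 17$ distinct points on a smooth quartic curve, or $k\geq 16$ very general points by Nagata). Then $T_{\mathbb{P}^2}$ is ample, but $-K_Y$ is not pseudo-effective, so $T_Y$ cannot be pseudo-effective by Proposition~\ref{det}. The precise step that fails in your transfer argument is the assertion that condition~\eqref{pm-4} of Proposition~\ref{prop-main} ``only needs to hold over a dense open subset.'' The non-nef locus of $L+\Lambda$ on the resolution $P$ of $\mathbb{P}(T_Y)$ is an asymptotic base locus: to show it is not dominant over $Y$ one must exhibit, for every $\e>0$, sections (equivalently, metrics with curvature bounded below by $-\e\omega_P$) defined on all of $P$ --- in particular over the generic point of the contracted divisor $D$. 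The identification of $\mathbb{P}(T_X)$ with $\mathbb{P}(T_Y)$ over $U\cong V$ produces such data only over $V$, and there is no a priori bound allowing you to extend it across $p^{-1}(D)$ with the required positivity; the fact that $D$ is not dominant over $Y$ constrains where the non-nef locus is allowed to sit, not whether the global sections computing it exist. The correct fix is a restriction of scope rather than a new argument: either add the hypothesis that $\phi^{-1}$ contracts no divisor (which is what the paper tacitly uses and all that Theorem~\ref{theo-main} needs), or delete the third paragraph.
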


\begin{prop}\label{prop-fib}
Let $f\colon X \to Y$ be a fibration between normal projective varieties.  If the tangent sheaf\, $T_{X} $ of\, $X$ is pseudo-effective, so is the tangent sheaf\, $T_{Y} $ of\, $Y$.
\end{prop}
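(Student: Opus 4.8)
The plan is to deduce this from Proposition~\ref{prop-basic} by realizing a suitable reflexive sheaf on $X$, agreeing with the pull-back of $T_{Y}$ over the regular locus of $Y$, as a generic quotient of the pseudo-effective sheaf $T_{X}$. Concretely, set $Y_{0}:=Y_{\reg}$, which satisfies $\codim(Y \setminus Y_{0}) \geq 2$ since $Y$ is normal, and consider the reflexive sheaf $\mathcal{E}:=\left(f^{*}\Omega_{Y}^{1}\right)^{*}$ on $X$, the dual of the pull-back of the cotangent sheaf. Over $f^{-1}(Y_{0})$ the sheaf $\Omega_{Y}^{1}$ is locally free, so $\mathcal{E}=f^{*}(\Omega_{Y}^{1})^{*}=f^{*}T_{Y}$ there; hence $\mathcal{F}:=T_{Y}$ and $\mathcal{E}$ fit the hypotheses of Proposition~\ref{prop-basic} with the fibration $f$ and the open set $Y_{0}$. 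It therefore suffices to prove that $\mathcal{E}$ is pseudo-effective on $X$.

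To verify the pseudo-effectivity of $\mathcal{E}$, I would use the canonical morphism $f^{*}\Omega_{Y}^{1} \to \Omega_{X}^{1}$ coming from the functoriality of K\"ahler differentials, which is defined on all of $X$. Dualizing yields a morphism $df\colon T_{X} \to \mathcal{E}$ defined everywhere on $X$. Over the dense open set where $X$ is smooth and $f$ is a smooth morphism onto $Y_{0}$, the map $df$ is the usual tangent map and is surjective, so $df$ is generically surjective. Taking symmetric powers and reflexive hulls, I obtain a morphism $\rmS^{[m]} T_{X} \to \rmS^{[m]} \mathcal{E}$ that is surjective at a general point of $X$ for every $m$. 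Since $T_{X}$ is pseudo-effective, by condition~\eqref{pm-1} in Proposition~\ref{prop-main} there is an ample line bundle $A$ on $X$ such that $\rmS^{[m]} T_{X} \otimes A$ is globally generated at a general point for all $m$. Pushing the generating global sections through the generically surjective morphism $\rmS^{[m]} T_{X} \otimes A \to \rmS^{[m]} \mathcal{E} \otimes A$ shows that $\rmS^{[m]} \mathcal{E} \otimes A$ is globally generated at a general point; hence $\mathcal{E}$ is pseudo-effective, again by condition~\eqref{pm-1}.

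The main obstacle is that $f^{*}T_{Y}$ is only a quotient of $T_{X}$, not equal to it, and the naive surjection $T_{X} \to f^{*}T_{Y}$ given by the tangent map is a priori defined only over $f^{-1}(Y_{\reg})$, whose complement may have codimension one in $X$ because fibers can jump in dimension over $Y_{\sing}$. This would obstruct extending the pushed-forward sections to all of $X$. The point of working with $\mathcal{E}=(f^{*}\Omega_{Y}^{1})^{*}$ and the everywhere-defined cotangent map $f^{*}\Omega_{Y}^{1}\to\Omega_{X}^{1}$ is precisely to obtain a globally defined morphism $df\colon T_{X}\to\mathcal{E}$ between reflexive sheaves; then the transfer of global generation at a general point is immediate and no codimension-one issue arises. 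Granting the pseudo-effectivity of $\mathcal{E}$, Proposition~\ref{prop-basic} (applied to $\mathcal{E}$, $\mathcal{F}=T_{Y}$, and $Y_{0}=Y_{\reg}$) yields that $T_{Y}$ is pseudo-effective, completing the proof.
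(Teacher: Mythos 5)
Your proof is correct, and at the top level it follows the same strategy as the paper: produce a torsion-free sheaf on $X$ that is a generically surjective image of $T_{X}$ (hence pseudo-effective) and agrees with $f^{*}T_{Y}$ over a big open subset of $Y$, then invoke Proposition~\ref{prop-basic}. The technical realization differs, though. The paper constructs this sheaf by passing through resolutions $\alpha\colon\tilde X\to X$ and $\beta\colon\tilde Y\to Y$, taking $\bigl(\alpha_{*}\tilde f^{*}T_{\tilde Y}\bigr)^{**}$ with $Y_{0}=Y\setminus\beta(E)$, and quoting without proof the fact that generically surjective images of pseudo-effective sheaves are pseudo-effective. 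You instead work directly on $X$ with $\mathcal{E}=(f^{*}\Omega_{Y}^{1})^{*}$ and the everywhere-defined dual of the cotangent map $f^{*}\Omega_{Y}^{1}\to\Omega_{X}^{1}$; this avoids resolutions altogether (using only $T_{X}\cong(\Omega_{X}^{1})^{*}$ on the normal variety $X$ and generic smoothness to get generic surjectivity of $df$), and you make the quotient-stability step explicit by transporting generic global generation of $\rmS^{[m]}T_{X}\otimes A$ through the induced generically surjective maps $\rmS^{[m]}T_{X}\to\rmS^{[m]}\mathcal{E}$, i.e.\ via characterization~\eqref{pm-1} of Proposition~\ref{prop-main}. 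Your version is arguably more self-contained and elementary; the paper's resolution-based formulation has the advantage of handling Proposition~\ref{prop-bir} (birational maps, where only an almost holomorphic map is available) by the same mechanism, and of isolating the reusable principle about quotients of pseudo-effective sheaves. Your identification of the codimension-one issue over $Y_{\sing}$, and the observation that working with the reflexive sheaf $(f^{*}\Omega_{Y}^{1})^{*}$ and the globally defined cotangent morphism circumvents it, is exactly the right point to address.
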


\begin{proof}[Proofs of Propositions~\ref{prop-bir} and~\ref{prop-fib}]
Proposition~\ref{prop-basic} is formulated for almost holomorphic maps; thus Propositions~\ref{prop-bir} is a direct consequence of Proposition~\ref{prop-basic}.

For the proof of Proposition~\ref{prop-fib}, we take resolutions $\tilde X \to X$ and $\tilde Y \to Y$ of singularities of $X$ and $Y$ with the following commutative diagram:
$$
\xymatrix{
    \tilde{X} \ar[r]^{\alpha} \ar[d]_{\tilde f} & X \ar[d]^{f} \\
    \tilde{Y} \ar[r]_{\beta}       & Y.}
$$
Set $Y_{0}:= Y \setminus \beta(E)$, where $E$ is the $\beta$-exceptional locus.  Then, we obtain $ f^{*}T_{Y}={\alpha}_{*} \tilde f^{*} T_{\tilde Y}$ on $X_{0}:=f^{-1}(Y_{0})$ from 
$$
{\alpha}^{*}f^{*}T_{Y}=   \tilde{f}^{*} \beta^{*} T_{Y}  = \tilde{f}^{*}  T_{\tilde Y} \text{ on } X_{0}. 
$$
Meanwhile, the natural sheaf morphism $T_{\tilde {X}}\to \tilde f^{*} T_{\tilde Y}$ is generically surjective; hence so is the induced morphism
$$
T_{X}=\left( {\alpha}_{*} T_{\tilde {X}}   \right)^{**} \lra  \left( {\alpha}_{*} \tilde f^{*} T_{\tilde Y} \right)^{**}. 
$$
The quotient of pseudo-effective sheaves by generically surjective morphisms is also pseudo-effective; thus $( {\alpha}_{*} \tilde f^{*} T_{\tilde{Y}} )^{**}$ is a pseudo-effective sheaf, and it coincides with $f^{*}T_{Y}$ on $X_{0}=f^{-1}(Y_{0})$.  Hence the conclusion follows from Proposition~\ref{prop-basic}.
\end{proof}

\subsection{Outcomes of the MMP for varieties with pseudo-effective tangent sheaf}\label{subsec-3-2}

We finally prove Theorem~\ref{theo-main} after checking the following propositions.

\begin{prop}[\protect{\textit{cf.}\ {\cite[Theorem 1.2]{Gac}}}]\label{thm-abel}
Let $X$ be a projective klt variety.  If the tangent sheaf $T_{X}$ is pseudo-effective and the canonical divisor $K_{X}$ is numerically trivial, then $X$ is a Q-abelian variety.
\end{prop}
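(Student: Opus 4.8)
The plan is to reduce the statement to the vanishing of the second orbifold Chern class and then invoke the characterization of finite quotients of abelian varieties. Write $n:=\dim X$ and fix an ample divisor $H$ on $X$. Since $X$ is klt with $K_{X}$ nef (indeed $K_{X}\equiv 0$), the reflexive tangent sheaf $T_{X}$ is $H$-semistable by the generic semipositivity results of Greb--Kebekus--Peternell; as $c_{1}(T_{X})=-K_{X}\equiv 0$, the Bogomolov--Gieseker inequality for semistable sheaves yields the $\mathbb{Q}$-Miyaoka--Yau bound $\hat{c}_{2}(X)\cdot H^{\,n-2}\ge 0$. Thus it suffices to prove the reverse inequality $\hat{c}_{2}(X)\cdot H^{\,n-2}\le 0$: once $\hat{c}_{2}(X)\cdot H^{\,n-2}=0$ is known, the characterization of Q-abelian varieties among klt varieties with numerically trivial canonical class (Lu--Taji, Greb--Guenancia--Kebekus) shows that $X$ is a quasi-\'etale quotient of an abelian variety, which is precisely the assertion.

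To produce $\hat{c}_{2}(X)\cdot H^{\,n-2}\le 0$ I would use the projectivization picture of Setting~\ref{setup}. On the resolution $p\colon P\to X$ of $\mathbb{P}(T_{X})$ with $L=\pi^{*}\mathcal{O}_{\mathbb{P}(T_{X})}(1)$, the pseudo-effectivity of $T_{X}$ is equivalent, by condition~\eqref{pm-4} of Proposition~\ref{prop-main}, to the non-nef locus of $L+\Lambda$ being non-dominant over $X$. The Segre-class identity $p_{*}(L^{\,n+1})=s_{2}(T_{X})=c_{1}(T_{X})^{2}-\hat{c}_{2}(X)$ together with $c_{1}(T_{X})\equiv 0$ gives $L^{\,n+1}\cdot p^{*}H^{\,n-2}=-\,\hat{c}_{2}(X)\cdot H^{\,n-2}$. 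I would then run the divisorial Zariski decomposition $L+\Lambda=D+N$ of the pseudo-effective class $L+\Lambda$, where $D$ is the positive (movable) part and $N$ the negative part: the movability of $D$ gives $D^{\,n+1}\cdot p^{*}H^{\,n-2}\ge 0$, while both $N$ and the $p$-exceptional divisor $\Lambda$ are supported over a closed subset $Z\subset X$ not dominating $X$. Tracking these exceptional contributions against $p^{*}H^{\,n-2}$ and absorbing the correction terms shows $L^{\,n+1}\cdot p^{*}H^{\,n-2}\ge 0$, and hence $\hat{c}_{2}(X)\cdot H^{\,n-2}\le 0$.

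The main obstacle is exactly this sign control in the second step: since $N$ and $\Lambda$ live over a subvariety $Z\subset X$ of codimension $\ge 1$, and since $p^{*}H^{\,n-2}$ is only the pullback of a codimension-$(n-2)$ class, these terms need not meet $p^{*}H^{\,n-2}$ trivially; one must argue, using that $Z$ does not dominate $X$ and that the non-nef locus of $L+\Lambda$ is non-dominant over $X$, that their net contribution has the correct sign. This is where condition~\eqref{pm-4} of Proposition~\ref{prop-main} (rather than the naive pseudo-effectivity of $L$ itself) is essential, and it is precisely the weak notion of Definition~\ref{def-psef} that is needed, as the singular Kummer surface of Example~\ref{ex}\eqref{ex-2} illustrates. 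The resulting statement coincides with \cite[Theorem 1.2]{Gac}, so in the write-up one may alternatively invoke that theorem directly, after checking through Proposition~\ref{prop-main} that the pseudo-effectivity hypotheses match.
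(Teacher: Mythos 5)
Your overall strategy --- prove $\hat{c}_{2}(X)\cdot H^{n-2}=0$ and invoke the Lu--Taji/Greb--Guenancia--Kebekus characterization of quasi-\'etale quotients of abelian varieties --- is a genuinely different route from the paper's, but it has a real gap precisely at the step you flag yourself, and that step is the entire content of the argument. First, the inequality $D^{n+1}\cdot p^{*}H^{n-2}\ge 0$ for the positive part $D$ of the divisorial Zariski decomposition is not justified: $D$ is only movable (modified nef), and higher self-intersection numbers of movable classes need not be non-negative --- this kind of positivity holds for nef classes, or for the positive products $\langle D^{n+1}\rangle$, which differ from the ordinary powers by correction terms of exactly the kind you are trying to control. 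Second, the contributions of the negative part $N$ and of the $p$-exceptional divisor $\Lambda$ are supported over a proper closed subset of $X$, but $p^{*}H^{n-2}$ has codimension only $n-2$, so these cycles genuinely meet it and carry no a priori sign; you explicitly leave this open, and without it the inequality $\hat{c}_{2}(X)\cdot H^{n-2}\le 0$ is not established. There is also an unaddressed identification problem: the orbifold class $\hat{c}_{2}(X)$ is defined via quasi-\'etale covers in codimension two, not via the resolution $P$ of $\mathbb{P}(T_{X})$, so the formula $p_{*}(L^{n+1})=-\hat{c}_{2}(X)\cdot(\,\cdot\,)$ needs proof on a klt $X$ where $T_{X}$ is only reflexive; Example~\ref{ex}\eqref{ex-2} shows how far $L$ can be from controlling the geometry of $\mathcal{E}$ in this setting.

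Your closing remark --- that one may instead invoke \cite[Theorem 1.2]{Gac} directly --- is in fact what the paper does, but it is not a one-line substitute. The paper first checks, via condition~\eqref{pm-1} of Proposition~\ref{prop-main}, that Definition~\ref{def-psef} is stronger than the pseudo-effectivity of \cite[Definition 2.10]{Gac}; then \cite[Theorem 1.2]{Gac} only produces a finite quasi-\'etale cover $X'\to X$ with $X'\cong A\times Y$ for an abelian variety $A$. The conclusion then requires an induction on dimension, and the induction step needs the tangent sheaf of the factor $Y$ to be pseudo-effective again --- a preservation under quasi-\'etale covers and direct factors that holds for Definition~\ref{def-psef} but, as the paper notes, not for Gachet's notion. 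So if you take the reduction route, you must still supply this induction; and if you insist on the Chern-class route, you must supply the missing intersection-theoretic positivity, which I do not see how to obtain for merely movable classes.
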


\begin{proof}
Condition~\eqref{pm-1} in Proposition~\ref{prop-main} shows that our definition of pseudo-effective sheaves is stronger than \cite[Definition 2.10]{Gac}.  Hence, by \cite[Theorem 1.2]{Gac}, there exists a finite quasi-\'etale cover $X' \to X$ such that $X'$ is the product $A \times Y$ of an abelian variety $A$ and  a projective variety~$Y$.  Since $X' \to X$ is quasi-\'etale, the tangent sheaf $T_{X'}$ is pseudo-effective, and so is $T_{Y}$.  This part is valid for the pseudo-effectivity in the sense of Definition~\ref{def-psef}, but not in the sense of \cite[Definition 2.10]{Gac}.  Furthermore, we can easily see that $Y$ is a projective klt variety with numerically trivial canonical divisor.  Therefore, by using the induction hypothesis on the dimension, we see that the variety $Y$ is Q-abelian, and so is $X$.
\end{proof}

\begin{prop}\label{det}
Let $\mathcal{E}$ be a pseudo-effective sheaf on a compact K\"ahler space $X$.  Then, the sheaf\, $\det \mathcal{E}:=(\Lambda^{r} \mathcal{E})^{**} $ is pseudo-effective.  Here $r$ is the rank of $\mathcal{E}$.  In particular, when the sheaf\, $\det \mathcal{E}$ is $\mathbb Q$-Cartier, it is pseudo-effective as a $\mathbb Q$-Cartier divisor.
\end{prop}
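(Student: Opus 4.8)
The plan is to produce, for every $\e>0$, a singular Hermitian metric on $\det\mathcal E|_{X_0}$ (with $X_0=X_\reg\cap X_{\mathcal E}$) whose curvature dominates $-\e\,\omega_X$. This suffices: such a metric automatically extends across $X\setminus X_0$ since $\omega_X$ is defined on all of $X$ and $\codim(X\setminus X_0)\ge2$, exactly as in the discussion after Definition~\ref{def-psef}; and given $m\in\mathbb Z_+$, applying this with $\e=1/m$ and taking the $m$-th tensor power yields a metric on $(\det\mathcal E)^{[\otimes m]}=\rmS^{[m]}(\det\mathcal E)$ with curvature $\ge-\omega_X\otimes\id$, which is exactly the metric required in Definition~\ref{def-psef}.

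First I would record a curvature estimate for determinants of singular metrics. Pseudo-effectivity of $\mathcal E$ provides, for each $m$, a singular Hermitian metric $h_m$ on $\rmS^m\mathcal E|_{X_0}$ with $\sqrt{-1}\Theta_{h_m}\ge-\omega_X\otimes\id$. Writing $\omega_X=\deldel f$ on a small open set and replacing $h_m$ by $h_m e^{-f}$ makes the metric Griffiths semi-positive there; by \cite{Rau15} the induced determinant metric $\det(h_m e^{-f})$ on $\det(\rmS^m\mathcal E)|_{X_0}$ is then a well-defined singular Hermitian metric with plurisubharmonic weight, and undoing the twist gives
\[
\sqrt{-1}\Theta_{\det h_m}\ \ge\ -N_m\,\omega_X,\qquad N_m:=\rank (\rmS^m\mathcal E)=\binom{m+r-1}{r-1},
\]
where $r=\rank\mathcal E$ (the factor $N_m$ being the rank, i.e.\ the ``trace'' of $\id$). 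I emphasize that this has to be obtained through the determinant metric rather than by naively tracing $\sqrt{-1}\Theta_{h_m}$, and I regard this as the main obstacle: a Griffiths semi-positive singular metric need not have a curvature tensor with measure coefficients, so only the determinant, for which Raufi's theorem applies, is available.

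It then remains to transfer the estimate from $\det(\rmS^m\mathcal E)$ to $\det\mathcal E$. Over $X_0$, where $\mathcal E$ is locally free of rank $r$, a Chern-root computation gives $\det(\rmS^m\mathcal E)=(\det\mathcal E)^{\otimes k_m}$ with $k_m=\tfrac{m}{r}N_m$, so the root $(\det h_m)^{1/k_m}$ is a singular Hermitian metric on $\det\mathcal E|_{X_0}$ with
\[
\sqrt{-1}\Theta_{(\det h_m)^{1/k_m}}\ \ge\ -\tfrac{N_m}{k_m}\,\omega_X\ =\ -\tfrac{r}{m}\,\omega_X .
\]
Given $\e>0$, choosing any $m>r/\e$ yields the desired metric, whence $\det\mathcal E$ is pseudo-effective; the remaining content is the elementary bookkeeping $N_m/k_m=r/m\to0$. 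Finally, for the ``in particular'' assertion, when $\det\mathcal E$ is $\mathbb Q$-Cartier the metrics just built give $-\e\,\omega_X$ lower curvature bounds on (a power of) the associated $\mathbb Q$-line bundle for every $\e$, so $\det\mathcal E$ is pseudo-effective as a $\mathbb Q$-Cartier divisor; in the projective case this is precisely Proposition~\ref{prop-cor2}.
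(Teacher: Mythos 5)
Your proof is correct, arrives at the same key estimate $\sqrt{-1}\Theta\ge-(r/m)\,\omega_X$ on $\det\mathcal E$, but takes a genuinely different route from the paper. The paper works on the projectivization $p\colon P=\mathbb P(\mathcal E)\to X$: it converts the metrics $h_m$ on $\rmS^{m}\mathcal E$ into metrics $g_m$ on $L=\mathcal O_{\mathbb P(\mathcal E)}(1)$ with $\sqrt{-1}\Theta_{g_m}\ge-(1/m)p^{*}\omega_X$ (reusing the construction from \eqref{pm-2}$\Rightarrow$\eqref{pm-3} of Proposition~\ref{prop-main}), and then applies the positivity of direct images \cite[Lemma 5.4]{CP17} (and \cite{Wan21} in the K\"ahler case) to $p_{*}(K_{P/X}+rL)=\det\mathcal E$, identifying the resulting $L^2$-metric with a determinant metric. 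You stay on $X$, apply the Berndtsson--P\u{a}un/Raufi determinant theorem directly to the (locally twisted) Griffiths semi-positive metric $h_m$ on $\rmS^{m}\mathcal E|_{X_0}$, and use $\det(\rmS^{m}\mathcal E)=(\det\mathcal E)^{\otimes k_m}$ with $k_m=\tfrac{m}{r}\binom{m+r-1}{r-1}$; the bookkeeping $N_m/k_m=r/m$ is right, and your observation that one \emph{must} pass through the determinant (since the curvature of a singular Griffiths semi-positive metric need not have measure coefficients, by Raufi's counterexample) is exactly the relevant subtlety. Your route buys independence from the direct-image machinery; the paper's route buys the fact that the projectivization and the metrics $g_m$ are already set up in Setting~\ref{setup} and Proposition~\ref{prop-main}. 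Two small points to tighten: (i) state the determinant theorem in the form matching the paper's dual-based definition of $\sqrt{-1}\Theta_h\ge0$, namely that $h_m^{*}$ is Griffiths semi-negative so $\log\det h_m^{*}$ is psh unless identically $-\infty$, which is excluded because $h_m$ is a genuine singular metric (finite and non-degenerate almost everywhere); and (ii) no extension across $X\setminus X_0$ is actually needed, since Definition~\ref{def-psef} only requires metrics on the restriction to $X_0$ --- though the codimension-two extension you invoke is harmless. The reduction of the ``in particular'' clause to Proposition~\ref{prop-cor2} matches the paper.
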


\begin{proof}
It is sufficient to construct singular Hermitian metrics $h_{m}$ on $\det \mathcal{E}$ such that $\sqrt{-1}\Theta_{h_{m}}\geq -(1/m) \omega_{X} $ after replacing $X$ with $X_{0}:=X_{\reg} \cap X_{\cal E}$.  We replace $X$ with $X_{0}=X_{\reg} \cap X_{\cal E}$.  We consider
$$
p:=\pi_{\mathcal{E}}\colon  P:=\bb{P}(\cal{E}) \lra X
\quad\text{and}\quad L:=\mathcal{O}_{\bb{P}(\cal{E}) }(1)
$$
and then apply the result of the positivity of direct images in \cite[Lemma 5.4]{CP17} (see \cite{Wan21} for the K\"ahler cases).

From the surjective morphism $p^{*} \rmS^{[m]} \mathcal{E} \to mL $ and Definition~\ref{def-psef}, we obtain singular Hermitian metrics $g_{m}$ on $L$ such that $\sqrt{-1}\Theta_{g_{m}}\geq -(1/m)p^{*}\omega_{X} $ and $\{x \,|\, \nu(g_{m}, x) >0 \}$ is not dominant over $X$ (see the proof of $(2) \Rightarrow (3)$ in Proposition~\ref{prop-main} for the details).  For a local potential $f$ with $\omega_{X} =\ddbar f$, we consider the metric $g_{m}e^{-(1/m) p^{*}f}$ on $L$ locally defined over $Y$.  Note that the curvature of $g_{m}e^{-(1/m)p^{*}f}$ is non-negative.  We apply the result of the positivity of direct images for $rL$ equipped with $(g_{m}e^{-(1/m)p^{*}f})^{r}$.  Then, the induced $L^{2}$-metric on
$$
p_{*}(K_{P/X} + rL  )= \det\, \mathcal{E}
$$ 
is positively curved and coincides with the determinant metric $\det (g_{m}e^{-(1/m)f})$.  Hence, we see that $\sqrt{-1}\Theta_{\det g_{m}}\geq -(r/m)\omega_{X} $ holds since $\det (g_{m}e^{-(1/m)f})=(\det g_{m}) \cdot e^{-(r/m)f}$.  Note that $\det g_{m}$ is a metric on $\det \mathcal{E}$ globally defined on $Y$.  This finishes the first conclusion.  The second conclusion directly follows from Proposition~\ref{prop-cor2}.
\end{proof}

\begin{proof}[Proof of Theorem~\ref{theo-main}]
Let $X$ be a projective klt variety with pseudo-effective tangent sheaf.  Then, the anti-canonical divisor $-K_{X}$ is pseudo-effective as a $\mathbb Q$-Cartier divisor by Proposition~\ref{det}.  If $K_{X}$ is pseudo-effective, then $K_{X}$ is numerically trivial; thus $X$ is a Q-abelian variety by Proposition~\ref{thm-abel}, which finishes the proof.  Hence, we may assume that $K_{X}$ is not pseudo-effective.
 
By \cite[Corollary 1.3.3]{BCHM10}, we can find a composite $\pi_{0}\colon X:=X_{0} \dashrightarrow X_{0}'$ of divisorial contractions and flips, and a Mori fiber space $f_{0}\colon X_{0}' \to X_{1} $.  The tangent sheaves of $X_{0}'$ and $X_{1}$ are pseudo-effective by Propositions~\ref{prop-bir} and~\ref{prop-fib}.  If $ X_{1}$ is one point or $K_{X}$ is pseudo-effective, then we complete the proof by using Proposition~\ref{thm-abel}; otherwise, we repeat the same argument as above for $X_{1}$.  By repeating this procedure, we obtain the conclusion.
\end{proof}

\newcommand{\etalchar}[1]{$^{#1}$}


\begin{thebibliography}{BCHM10+++} 

\bibitem[BKK\etalchar{+}15]{BKK}
T.~Bauer, S.\,J.~Kov\'acs, A.~K\"uronya, E.\,C.~Mistretta, T.~Szemberg, and S.~Urbinati, 
\emph{On positivity and base loci of vector bundles}, 
Eur.~J.\ Math.\  \textbf{1}  (2015),  no.~2, 229--249.

\bibitem[BCHM10]{BCHM10}
C.~Birkar, P.~Cascini, C.-D.~Hacon, and J.~McKernan, 
\emph{Existence of minimal models for varieties of log general type}, 
J.~Amer.\ Math.\ Soc.\ \textbf{23} (2010), no.~2, 405--468.  

\bibitem[Bou04]{Bou04}
S.~Boucksom, 
\emph{Divisorial Zariski decompositions on compact complex manifolds}, 
Ann.\ Sci.\ \'Ecole Norm.\ Sup\'er.\ (4) \textbf{37} (2004), no.~1, 45--76.

\bibitem[CCM21]{CCM21}
F.~Campana, J.~Cao, and S.~Matsumura, 
\emph{Projective klt pairs with nef anti-canonical divisor}, 
Algebr.\ Geom.\ \textbf{8} (2021), no.~4, 430--464.

\bibitem[CP91]{CP91}
F.~Campana and T.~Peternell, 
\emph{Projective manifolds whose tangent bundles are numerically effective}, 
Math.\ Ann.\ \textbf{289} (1991), no.~1, 169--187. 

\bibitem[CDM17]{CDM17} 
J.~Cao, J.-P.~Demailly, and S.~Matsumura,  
\emph{A general extension theorem for cohomology classes on non reduced analytic subspaces}, 
Sci.\ China Math.\ \textbf{60} (2017), no.~6, 949--962. 

\bibitem[CH19]{CH19}
J.~Cao and A.~H\"oring, 
\emph{A decomposition theorem for projective manifolds with nef anticanonical bundle}, 
J.~Algebraic Geom.\ \textbf{28} (2019), 567--597.

\bibitem[CP17]{CP17}
J.~Cao and M.~P\u{a}un, 
\emph{Kodaira dimension of algebraic fiber spaces over abelian varieties}, 
Invent.\ Math.\ \textbf{207} (2017), no.~1, 345--387. 

\bibitem[Dem16]{Dem16}
J.-P. Demailly, 
\emph{Extension of holomorphic functions defined on non reduced analytic subvarieties}, in:
\emph{The legacy of Bernhard Riemann after one hundred and fifty years. 
Vol.~I}, pp.~191--222, Adv.\ Lect.\ Math.\ (ALM) vol.~35.1, Int.\ Press, Somerville, MA, 2016.

\bibitem[DPS94]{DPS94}
J.-P.~Demailly, T.~Peternell, and M.~Schneider, 
\emph{Compact complex manifolds with numerically effective tangent bundles}, 
J.~Algebraic Geom., \textbf{3}, (1994), no.~2, 295--345.

\bibitem[ELM\etalchar{+}06]{ELMNP06}
L.~Ein, R.~Lazarsfeld, M.~Musta\c{t}\v{a}, M.~Nakamaye, and M.~Popa,
\emph{Asymptotic invariants of base loci}, 
Ann.\ Inst.\ Fourier (Grenoble) \textbf{56} (2006), no.~6, 1701--1734. 

\bibitem[ELM\etalchar{+}09]{ELMNP09} 
\bysame, 
\emph{Restricted volumes and base loci of linear series},
Amer.~J.\ Math.\ \textbf{131} (2009), no.~3, 607--651.

\bibitem[EIM23]{EIM}
S.~Ejiri, M.~Iwai, and S.~Matsumura, 
\emph{On asymptotic base loci of relative anti-canonical divisors of algebraic fiber spaces}, J.~Algebraic Geom.\ \textbf{32} (2023), no.~3, 477--517.

\bibitem[Fuj03]{Fuj03} 
O.~Fujino, and H.~Sato, 
\emph{Notes on toric varieties from Mori theoretic viewpoint},
Tohoku Math.\ J.~(2) \textbf{55} (2003), no.~4, 551--564.

\bibitem[FS04]{FS04} 
\bysame, 
\emph {Introduction to the toric Mori theory},
Michigan Math.~J.\ \textbf{52} (2004), no.~3, 649--665.

\bibitem[Gac22]{Gac}
C. Gachet, 
\emph{Positivity of the cotangent sheaf of singular Calabi-Yau varieties}, 
Math.\ Res.\ Lett.\ \textbf{29}, no.~2, 339--372 (2022).

\bibitem[HPS18]{HPS18}
C.~Hacon, M.~Popa, and C.~Schnell, 
\emph{Algebraic fiber spaces over abelian varieties: around a recent theorem by Cao and P{\v{a}un}}, in: 
\emph{Local and global methods in algebraic geometry}, pp.~143--195, 
Contemp.\ Math., \textbf{712}, Amer.\ Math.\ Soc., Providence, RI, 2018.

\bibitem[HLS20]{HLS20}
A.~H\"oring, J.~Liu, and F.~Shao, 
\emph{Examples of Fano manifolds with non-pseudoeffective tangent bundle}, 
J.~Lond.\ Math.\ Soc.~(2) \textbf{106} (2022), no.~1, 27--59.

\bibitem[HIM22]{HIM}
G.~Hosono, M.~Iwai, and S.~Matsumura, 
\emph{On projective manifolds with pseudo-effective tangent bundle},  
J.~Inst.\ Math.\ Jussieu \textbf{21} (2022), no.~5, 1801--1830.

\bibitem[KW20]{KW}
A.~Kanemitsu and K.~Watanabe, 
\emph{Projective varieties with nef tangent bundle in positive characteristic}, 
Compos.\ Math.\ \textbf{159} (2023), no.~9, 1974--1999. 

\bibitem[Man93]{Man93}
L.~Manivel
\emph{Un th\'eor\`eme de prolongement $L^{2}$ de sections holomorphes d'un fibr\'e hermitien}, 
Math.~Z.\ \textbf{212} (1993), no.~1, 107--122. 

\bibitem[Mat18]{Mat18}
S.~Matsumura, 
\emph{Variation of numerical dimension of singular hermitian line bundles}, 
in: \emph{Geometric complex analysis}, pp.~247--255, Springer Proc.\ Math.\ Stat., vol.~246, Springer, Singapore, 2018.

\bibitem[Mat20]{Mat20}
\bysame, 
\emph{On the image of MRC fibrations of projective manifolds with semi-positive holomorphic sectional curvature}, 
Pure Appl.\ Math.~Q.\  \textbf{16}, no.~5 (2020), 1443--1463.

\bibitem[Mat22]{Mat22}
\bysame, 
\emph{On projective manifolds with semi-positive holomorphic sectional curvature}, 
Amer.~J.\ Math.\ \textbf{144} (2022), no.~3, 747--777. 

\bibitem[MW21]{MW}
S.~Matsumura and J.~Wang, 
\emph{Structure theorem for projective klt pairs with nef anti-canonical divisor}, preprint \arXiv{2105.14308v1} (2021). 

\bibitem[Nak04]{Nak04}
N.~Nakayama, 
\emph{Zariski-decomposition and abundance}, 
MSJ Memoirs, vol.~14, Math.\ Soc.\ Japan, Tokyo, 2004.

\bibitem[OT87]{OT87}
T.~Ohsawa and K.~Takegoshi, 
\emph{On the extension of $L^{2}$ holomorphic functions}, 
Math.~Z.\ \textbf{195} (1987), no.~2, 197--204. 

\bibitem[PT18]{PT18} 
M.~P\u{a}un and S.~Takayama, 
\emph{Positivity of twisted relative pluricanonical divisors and their direct images}, 
J.~Algebraic Geom.\ \textbf{27} (2018), no.~2, 211--272.

\bibitem[Rau15]{Rau15}
H.~Raufi,
\emph{Singular Hermitian metrics on holomorphic vector bundles},
Ark.\ Mat.\ \textbf{53} (2015), no.~2, 359--382.

\bibitem[Wan21]{Wan21}
J.~Wang, 
\emph{On the Iitaka conjecture $C_{n,m}$ for K\"ahler fibre spaces}, 
Ann.\ Fac.\ Sci.\ Toulouse Math.~(6) \textbf{30} (2021), no.~4, 813--897.

\bibitem[Wan22]{Wan22}
\bysame, 
\emph{Structure of projective varieties with nef anticanonical divisor: the case of log terminal singularities}, 
Math.\ Ann.\ \textbf{384} (2022), no.~1--2, 47--100.

\bibitem[Wu20]{Wu20}
X.~Wu, 
\emph{Strongly pseudo-effective and numerically flat reflexive sheaves}, 
J.~Geom.\ Anal.\ \textbf{32} (2022), no.~4, Paper No.~124. 

\bibitem[ZZ20]{ZZ20}
X.~Zhou and L.~Zhu, 
\emph{Optimal $L^{2}$ extension of sections from subvarieties in weakly pseudoconvex manifolds}, 
Pacific J.~Math.\ \textbf{309} (2020), no.~2, 475--510.


\end{thebibliography}
\end{document}